%
%
\documentclass[12]{amsart}

\usepackage{tabularx, verbatim, amsfonts, amsthm, amssymb, color, epsfig, multienum, setspace}
\setstretch{1.1}
\usepackage[margin=3cm]{geometry}
\setlength{\parindent}{0pt}
\setlength{\parskip}{5pt}

\newtheorem{lemma}{Lemma}

\usepackage[justification=centering]{caption}
\usepackage{graphicx}

\usepackage{tikz}

\newtheorem{theorem}{Theorem}[section]

\newtheorem{corollary}[theorem]{Corollary}
\theoremstyle{definition}

\newtheorem{definition}[theorem]{Definition}

\newtheorem{remark}[theorem]{Remark}
\newtheorem{example}[theorem]{Example}

\newtheorem{proposition}[theorem]{Proposition}

\numberwithin{equation}{section}

\newcommand{\kthree}[7]{
\begin{tikzpicture}[scale=#1,auto,main node/.style={circle,draw,font=\sffamily\Large\bfseries}]
	\tikzstyle{every node}=[draw, shape=rectangle];
	\path (0,8.66)			node (g1) {$#2$};
	\path (5,0)		node (g2) {$#3$};
	\path (-5,0)		node (g3) {$#4$};
	\draw (g1) -- (g2);
	\draw (g2) -- (g3);
	\draw (g3) -- (g1);

  \path[every node/.style={font=\sffamily}]
    (g1) edge node {$#5$} (g2)
    (g2) edge node {$#6$} (g3)
    (g3) edge node {$#7$} (g1);
\end{tikzpicture}
}

\newcommand{\R}{\mathbb{R}}

\newcommand{\lcm}{\mathrm{lcm}}

\begin{document}

\title{Determinantal Conditions for Modules of Generalized Splines}
\author[Calta]{Kariane Calta}
\address{Department of Mathematics, Vassar College, Poughkeepsie, NY, 12604}

\author[Rose]{Lauren L. Rose}
\address{Department of Mathematics, Bard College, Annandale-on-Hudson, NY, 12504}
\email{rose@bard.edu}
\keywords{generalized splines, commutative algebra, number theory, module theory, graph theory}

\begin{abstract}
  Generalized splines on a graph $G$ with edge labels in a commutative ring $R$ are vertex labelings such that if two vertices share an edge in $G$, the difference between the vertex labels lies in the ideal generated by the edge label. When $R$ is an integral domain, the set of all such splines is a finitely generated $R$-module $R_G$ of rank $n$, the number of vertices of $G$. We find determinantal conditions on subsets of $R_G$ that determine  whether $R_G$ is a free module, and if so, whether a so called "flow-up class basis" exists.  
\end{abstract}

\maketitle
\section{Introduction} 

Rings, modules and vector spaces of polynomial splines have been well studied in terms of their algebraic properties, for example in \cite{billera1}, \cite{billera2}, \cite{haas}, \cite{rose1}, \cite{rose2}.  The notion of a polynomial spline can be generalized to define splines on edge labeled graphs over arbitrary rings. The study of generalized splines over rings was introduced by Gilbert, Polster, and Tymoczko in \cite{julia}.  Generalized integer splines were introduced in  \cite{smith students}, where they constructed module bases for integer splines on cycles. 

If $R$ is a ring and $G$ is a graph with $n$ vertices, then the set of generalized splines $R_G$ is a submodule of $R^n$. When $R_G$ is a free $R$-module, the bases of particular interest are the \textit{flow-up class bases}.  Elements of a flow-up class basis are $n$-tuples with $k$ leading zeros, where $0 \leq k \leq n-1$. As a result, if we order the basis elements according to the number of leading zeroes, they form the columns of a lower triangular matrix, whose determinant is easy to compute.

In this paper we address two questions about modules of generalized splines:

\begin{enumerate}
    \item For which rings do flow-up class bases for arbitrary graphs always exist? 

\smallskip

\item  If a flow-up class basis doesn't exist but the module $R_G$ is free, do we have any information about bases?  
\end{enumerate}

As an answer to Question 1, we generalize results from \cite{studentpaper} and show that flow-up class bases for spline modules necessarily exist over principal ideal domains. In particular, this includes Euclidean rings such as the integers and polynomial rings in one variable over a field as well as non-Euclidean rings such as $\mathbb{Z}[1/2(1 + \sqrt{-19})]$. (\cite{noneuclidean}) We also find that for a fixed ordering of the  vertices of the graph $G$, the leading term of a basis element is unique up to multiplication by a unit. We then give an example to show that when $R$ is not a principal ideal domain, flow-up class bases don't necessarily exist, so our result is essentially the best possible.

\

In answer to Question 2, we first generalize results of \cite{gjoni}, \cite{mahdavi}, \cite{blaine},\cite{alt1}, and \cite{alt2} to show that over a principal ideal domain, a set of $n$ elements $\mathcal{B}=\left\{ B_1, \ldots, B_n \right\}$ is a basis for $R_G$ if and only the determinant of the matrix $[B_1,\ldots,B_n]$  has a prescribed form. We further show that over GCD domains (where flow-up class bases need not exist), any set of elements with the same prescribed form will be a basis. We prove the converse under certain conditions, but we conjecture that our result is true in all cases.

\

In \cite{multivariatesplines}, Rose proved the following theorem for modules $M_r$ of $r$-differentiable polynomial splines defined on $D$, a polyhedral subdivision of $\mathbb{R}^d$: 

\begin{theorem}  Let $B=\left\{ B_, \dots, B_n \right\} \subset M_r$. Then $B$ is a basis of $M_r$ if and only if $\det[B_1, \ldots B_n] = Q^{r+1}$, where $Q$ is the product of the linear forms that define the co-dimension $1$ interior faces of $D$. \end{theorem}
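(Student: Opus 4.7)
The plan is to reduce to a determinant computation for a single fixed basis by a standard change-of-basis argument, and then to extract $Q^{r+1}$ from the spline gluing conditions. Assuming $M_r$ is free of rank $n$, fix any basis $\mathcal{F} = \{F_1, \ldots, F_n\}$ of $M_r$; for any $B = \{B_1, \ldots, B_n\} \subset M_r$ write $B_i = \sum_j c_{ij} F_j$ with $c_{ij} \in R = k[x_1, \ldots, x_d]$. Then $\det[B_1, \ldots, B_n] = \det(C) \cdot \det[F_1, \ldots, F_n]$, and since $R$ is a domain, $B$ is a basis of $M_r$ if and only if $C$ is invertible over $R$, i.e.\ $\det(C) \in k^{\times}$. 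Thus the theorem reduces to showing that $\det[F_1, \ldots, F_n] = Q^{r+1}$ for some, and hence every, basis, up to a suitable normalization.

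For the divisibility $Q^{r+1} \mid \det[B_1, \ldots, B_n]$, which holds for any $B \subset M_r$, label the rows of the matrix by the maximal cells $\sigma_1, \ldots, \sigma_n$ of $D$. For each interior codimension one face $\tau = \sigma_i \cap \sigma_j$ with linear form $\ell_\tau$, the defining condition on $M_r$ forces row $i$ and row $j$ of $[B_1, \ldots, B_n]$ to agree modulo $\ell_\tau^{r+1}$. Subtracting one from the other then pulls $\ell_\tau^{r+1}$ out of the resulting row. Iterating along the edges of a spanning tree of the dual graph of $D$ extracts each factor $\ell_\tau^{r+1}$ exactly once without repetition, giving the divisibility claim.

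To match the two sides exactly when $B$ is a basis, I would exhibit a flow-up basis adapted to a shelling-type ordering of the cells: each $F_k$ is supported on $\sigma_1, \ldots, \sigma_k$ with $k$-th component equal to $\prod_\tau \ell_\tau^{r+1}$, where the product runs over codimension one faces $\tau$ along which $\sigma_k$ meets $\sigma_1 \cup \cdots \cup \sigma_{k-1}$. Lower-triangularity of $[F_1, \ldots, F_n]$ would then give $\det[F_1, \ldots, F_n] = \prod_k \prod_\tau \ell_\tau^{r+1} = Q^{r+1}$, since each interior face appears as a factor for precisely one index $k$ (namely the larger of the two cells it separates in the ordering).

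The principal obstacle is producing this flow-up basis in the multivariate polynomial setting, where $R$ is not a principal ideal domain and flow-up class bases need not exist in general, as emphasized elsewhere in this paper. One must argue that the candidate diagonal entries really do arise as leading components of splines, and that no further polynomial factor sneaks into the determinant. A clean way around this is to bypass the explicit construction and instead compare $\deg(\det[B])$ with $\deg(Q^{r+1})$ for an arbitrary basis $B$ via the Hilbert series of $M_r$, forcing the quotient $\det[B] / Q^{r+1}$ to be a nonzero constant, which can then be normalized to $1$.
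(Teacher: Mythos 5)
Your reduction to a single basis is fine (it is the paper's Theorem \ref{basis-theorem}), but both of the substantive steps have problems. First, the divisibility argument via a spanning tree of the dual graph does not give $Q^{r+1}\mid\det[B_1,\ldots,B_n]$: once you subtract row $i$ from row $j$ and pull $\ell_\tau^{r+1}$ out of the new row $j$, that row no longer satisfies the congruences attached to the other faces, so iterating only extracts the factors attached to the $n-1$ tree edges. Whenever the dual graph has cycles (the typical case, e.g.\ a triangulation around an interior vertex), the faces outside the tree are simply missed. The correct mechanism, which is how the paper argues in Theorem \ref{lcm-lemma}, is to show separately for each interior face that $\ell_\tau^{r+1}$ divides the original determinant (one row subtraction, then expand), and then pass to the least common multiple, using that the forms are pairwise relatively prime or equal; ``exactly once along a tree'' is neither needed nor true.

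Second, and more seriously, the heart of the theorem --- that some (hence every) basis has determinant $Q^{r+1}$ up to a nonzero constant --- is exactly the step you leave open. Your proposed shelling-adapted flow-up basis with triangular matrix does not exist in general: the paper's own Proposition on the $3$-cycle with labels $x^2,y^2,(x+y)^2$ exhibits a free polynomial spline module ($r=1$, a subdivided disk) admitting no flow-up class basis, so any argument resting on such a basis is doomed in precisely this setting. The Hilbert-series fallback is only a sketch: it presupposes a homogeneous basis and knowledge of the sum of the degrees of the basis elements, which is essentially the hard dimension information for $M_r$ and is not something one can just cite. The paper's route (proved in the generalized form, Theorem \ref{relprime}) avoids both: writing $\det(\mathcal B)=sQ$ with $Q$ the product of the (relatively prime) edge labels, one builds for each label $a_i$ an explicit set of $n$ splines whose determinant is $Q\,(Q/a_i)^{n-1}$; since these lie in the span of the basis, $s\mid (Q/a_i)^{n-1}$ for every $i$, and Lemma \ref{hat} (with the lemma on powers of relatively prime elements) forces $s$ to be a unit. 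The converse direction, $\det = uQ$ implies basis, is handled by the Cramer's-rule span argument of Lemma \ref{Cramer} and Theorem \ref{det-theorem} (your change-of-basis reduction would also give it once the basis determinant is known, but that is the missing ingredient). To repair your proposal you would need to replace the flow-up/Hilbert-series step with an argument of this bounding type.
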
 

In this setting, the linear forms are either relatively prime or equal, and correspond to edge labels in the dual graph of $D$. In the case of generalized polynomial splines, the situation is more complicated if you allow arbitrary polynomials as edge labels. However, with a few restrictions on the edge labels, we prove an analogous and more general result for arbitrary graphs over a GCD domain:

\begin{theorem} Let $R$ be a GCD domain and let $G$ be a graph with edge labels $a_1, \ldots, a_m \in R$  that are pairwise relatively prime. Then $R_G$ is free with bases $\mathcal{B}=\left\{ B_1, \ldots, B_n \right\}$ if and only if $\det(B)=uQ$ for some unit $u \in R$. \end{theorem}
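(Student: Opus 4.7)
The plan is to derive both implications from a single divisibility fact. \textbf{Key Lemma:} for any $n$-tuple $\{B_1,\ldots,B_n\} \subset R_G$, the product $Q = a_1 \cdots a_m$ divides $\det[B_1 \mid \cdots \mid B_n]$. For each edge $e = (u,w)$ with label $a_e$, each spline $B_j$ satisfies $B_j(u) \equiv B_j(w) \pmod{a_e}$, so rows $u$ and $w$ of $M := [B_1 \mid \cdots \mid B_n]$ are congruent modulo $a_e$; subtracting one from the other shows $a_e \mid \det M$. Since the $a_e$ are pairwise coprime in the GCD domain $R$, this upgrades to $Q \mid \det M$.

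For the direction $(\Leftarrow)$, assume $\det \mathcal{B} = uQ$ with $u \in R^\times$. Nonvanishing of $\det M$ makes $\mathcal{B}$ linearly independent over the fraction field $K$. For any spline $v \in R_G$, Cramer's rule over $K$ gives $v = \sum_i c_i B_i$ with $c_i = \det(M_i)/\det(M)$, where $M_i$ is obtained from $M$ by replacing its $i$-th column with $v$. The columns of $M_i$ are all splines, so the Key Lemma gives $\det(M_i) = Q w_i$ for some $w_i \in R$, whence $c_i = u^{-1} w_i \in R$. Hence $\mathcal{B}$ spans $R_G$, so $\mathcal{B}$ is an $R$-basis (and in particular $R_G$ is free).

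For the direction $(\Rightarrow)$, assume $R_G$ is free with basis $\mathcal{B}$. The Key Lemma gives $Q \mid \det \mathcal{B}$; write $\det \mathcal{B} = Qd$. Because $a_e \mid Q$ for every edge $e$, multiplication by $Q$ sends $R^n$ into $R_G$, so $QR^n \subseteq R_G$; writing each $Q\mathbf{e}_j$ in the basis $\mathcal{B}$ yields a matrix identity $MC = QI_n$, whence $\det M \cdot \det C = Q^n$ and $d \mid Q^{n-1}$. To conclude that $d$ is a unit, I plan to compute the $0$-th Fitting ideal $\mathrm{Fitt}_0(R^n/R_G)$ in two ways: the presentation by $M$ gives $\mathrm{Fitt}_0(R^n/R_G) = (\det M) = (Qd)$, while identifying $R^n/R_G$ with the image of the edge-difference map $\phi \colon R^n \to \prod_e R/(a_e)$ and exploiting pairwise coprimality of the $a_e$ should force this ideal to equal $(Q)$, so $(Qd) = (Q)$ and $d \in R^\times$.

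The main obstacle is this Fitting ideal identification, since over a genuine GCD domain the isomorphism $R/(Q) \cong \prod_e R/(a_e)$ can fail and, as the paper notes, flow-up class bases need not exist even when $R_G$ is free. I would try to close the gap by localization: because $R$ is a GCD domain and therefore integrally closed, each localization $R_\mathfrak{p}$ at a height-one prime is a discrete valuation ring, and the PID version of the theorem applies there to show that $d$ becomes a unit in $R_\mathfrak{p}$. Combined with the global bound $d \mid Q^{n-1}$, which confines the potential non-unit factors of $d$ to primes dividing $Q$, this should force $d$ to be a global unit in $R$.
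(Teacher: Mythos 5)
Your Key Lemma and your $(\Leftarrow)$ direction are correct and essentially the paper's own argument: the row-congruence observation is Theorem \ref{lcm-lemma}, and the Cramer's-rule step is Lemma \ref{Cramer}/Theorem \ref{det-theorem}. The genuine problem is the $(\Rightarrow)$ direction, and you have identified its location but not closed it. The bound $d \mid Q^{n-1}$ obtained from $MC=QI_n$ is too weak on its own (it does not exclude, say, $d=a_1$), and neither of your two proposed completions works over a general GCD domain. The Fitting-ideal identification $\mathrm{Fitt}_0(R^n/R_G)=(Q)$ is exactly what needs proof, and it leans on a CRT-type decomposition $\prod_e R/(a_e)$ that requires comaximality, not mere coprimality (already in $k[x,y]$ the labels $x$ and $y$ are coprime but not comaximal — this is the paper's own counterexample setting). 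The localization fallback rests on two false premises: a GCD domain need not be Krull or Noetherian, so its localizations at height-one primes need not be DVRs (any valuation domain of rank $\geq 2$, or a non-discrete rank-one valuation domain, is Bézout hence GCD and violates this), and even if they were, "$d$ is a unit in $R_{\mathfrak{p}}$ for all height-one $\mathfrak{p}$" does not imply "$d$ is a unit in $R$" unless every non-unit lies in some height-one prime, which again fails outside the Krull setting. So the argument as proposed does not establish that $d$ is a unit.

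The paper closes this direction with an elementary, purely GCD-theoretic refinement of your $MC=QI_n$ idea. Fix an edge label $a_i$, say joining $v_1,v_2$, and set $\hat{a_i}=Q/a_i$. Instead of the columns $Q\mathbf{e}_j$, take the columns $(\hat{a_i},\hat{a_i},0,\ldots,0)^{T}$, $Q\mathbf{e}_2$, and $\hat{a_i}\mathbf{e}_j$ for $j\geq 3$: each is a spline (on the edge $a_i$ the two entries agree, and $\hat{a_i}$ is divisible by every other label), and the determinant is $Q\hat{a_i}^{\,n-1}$. Expressing these columns in the basis $\mathcal{B}$ gives $Qd \mid Q\hat{a_i}^{\,n-1}$, hence $d \mid \hat{a_i}^{\,n-1}$ for every $i$ — a strictly sharper conclusion than $d\mid Q^{n-1}$, since the factor $a_i$ has been removed. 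Then the lemma that pairwise relatively prime elements remain pairwise relatively prime after taking powers, together with Lemma \ref{hat} (a common divisor of all the $\hat{a_i}$'s, here applied to the labels $a_i^{\,n-1}$, must be a unit), forces $d$ to be a unit. If you replace your Fitting-ideal/localization step with this per-edge determinant computation, your proof becomes complete and coincides with the paper's.
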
 



\section{Preliminaries}

We begin by defining a generalized spline over a commutative ring with unity. Some of our results will require stronger assumptions about $R$, e.g. that $R$ is an integral domain, principal ideal domain, or a GCD domain.  

Let $R$ be a commutative ring with unity and let $G$ be a connected graph with $n$ vertices, edge set $E = \left\{e_1,\ldots, e_m\right\}$, and $A = \left\{ a_1,\ldots, a_m\right\}$ an assignment of edge labels in $R$.

\begin{definition}
A \textbf{generalized spline} on $G$ over the ring $R$ is an assignment of an $n$-tuple  $F =(f_1, \ldots f_n)\in R^n$ to the vertices of $G$ such that for any two vertices $v_i$ and $v_j$ that are connected by an edge $e_{k}$, we have $f_i\equiv f_j \bmod{a_{k}}$. That is, $f_i- f_j \in <a_{k}>$, the ideal in $R$ generated by $a_{k}$.  
\end{definition}

We denote the set of all $R$-splines on $(G,A)$, the graph $G$ with edge weights $A$, by $R_{(G,A)}$ or just $R_G$ when $A$ is fixed. The following basic properties are easily verified.   

\begin{remark} \leavevmode
\begin{enumerate}
    \item For any $G$, $R_G$ always contains $\mathbf{0} = (0,0,\ldots ,0)$ and  $\mathbf{1}= (1,1,\ldots ,1)$, since for all $a \in R$,  $0  \equiv    0 \bmod a$  and  $ 1   \equiv     1 \bmod a$.
    \item The splines $R_{G}$ form a module over $\mathbb{Z}$ under componentwise addition and scalar multiplication. 
    \item When $R$ a principal ideal domain, $R_G$ is a free module, and hence has a module basis. For other rings, freeness is not guaranteed.
\end{enumerate}
\end{remark} 

When $R_G$ is free, the bases that are of particular interest to us are the \textbf{flow-up class bases}, primarily because when placed as rows or columns of a matrix, the matrix will be triangular, making it easy to compute the determinant. In fact, the initial non-zero components of 
such basis elements are unique up to multiplication by a unit, and are important to understanding the structure of $R_G$.  

\begin{definition}\leavevmode
\begin{enumerate}
    \item For $0 \leq i < n$, let $\mathcal{F}_i= \left\{F=(0,0,\ldots,f_{i+1},\ldots,f_n): F \in R_G \right\}$. 
    \item We define $\mathcal{F}_{n} = \left\{\mathbf{0}\right\}$, the set containing just the zero spline $\mathbf{0} = (0,0,\ldots, 0)$.
    \item We refer to $\mathcal{F}_i$ as the $i$th \textbf{flow-up class}.
    \item A \textbf{flow-up class basis} for $R_G$ is a module basis consisting of an element from each non-zero flow-up class. 
\end{enumerate} 
\end{definition}

\begin{remark}\leavevmode 
\begin{enumerate}
\item By definition, the $\mathcal{F}_i$ form a partition of $R_G$. 
\item Our definition of  $\mathcal{F}_{i}$ is stricter than that used by \cite{julia}, which only requires that the first $i$ components be zero. In their case, the sets $\mathcal{F}_i$ are nested, rather than forming a partition.  
\end{enumerate}
\end{remark}

\begin{example} 
Consider the graph $(C_3,A)$ depicted in Figure \ref{figure of GA weighted},   where $A =(a_1,a_2,a_3)$. A set of vertex labels $(f_1,f_2,f_3)$ will form a spline if and only if the following conditions hold:

\begin{align*}
a_1| f_1 - f_2\\
a_2|f_2-f_3\\
a_3| f_3 - f_1
\end{align*}
\end{example}

\begin{figure}
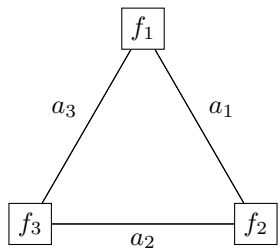

$\kthree{0.3}{f_{1}}{f_{2}}{f_{3}}{a_1}{a_2}{a_3}$
\caption{Generalized spline on a 3-cycle with edge weights $a_1,a_2,a_3$}
\label{figure of GA weighted}
\end{figure}

Next, we give a specific example of an integer spline. 

\begin{example} 
In Figure \ref{figure of GA with integers} , the edge labels of $C_3$ are $a_1=4$, $a_2=5$ and $a_3=1$. It is easy to verify that the vertex labeling $(3,15,5)$ is an integer spline on $C_3.$
\end{example} 

\begin{figure}
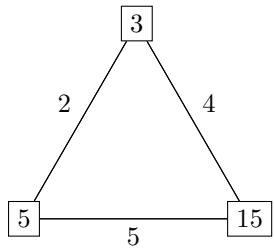

$\kthree{0.3}{3}{15}{5}{4}{5}{2}$
\caption{Example of a Spline on a 3-cycle}
\label{figure of GA with integers}
\end{figure}

\section{Flow-up Class Bases}

We begin by showing that if $R$ is an integral domain and $G$ is an arbitrary graph, the flow-up classes in $R_G$ are non-empty. 

\begin{lemma}
Let $R$ be an integral domain and let $(G,A)$ be an edge labeled graph with $n$ vertices. Then $\mathcal{F}_i \subset R_G$ is non-empty for all $0 \leq i \leq n$. \end{lemma}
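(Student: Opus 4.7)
The plan is to construct, for each $0 \leq i \leq n$, an explicit element of $R_G$ lying in $\mathcal{F}_i$. The case $i = n$ is immediate, since $\mathcal{F}_n = \{\mathbf{0}\}$ by definition.

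For $0 \leq i < n$, partition the vertex set as $S = \{v_1, \ldots, v_i\}$ and $T = \{v_{i+1}, \ldots, v_n\}$, let $E(S,T)$ denote the set of edges crossing this partition, and set
\[ P_i \;=\; \prod_{e \in E(S,T)} a_e, \]
with the convention that an empty product equals $1$ (which covers the case $i = 0$). Define $F^{(i)} \in R^n$ by assigning $0$ to each vertex in $S$ and $P_i$ to each vertex in $T$.

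The first step is to verify that $F^{(i)}$ satisfies the spline congruences edge-by-edge: for an edge with both endpoints in $S$ or both in $T$, the vertex-label difference is $0$; for an edge $e \in E(S,T)$, the difference is $\pm P_i$, which lies in $\langle a_e \rangle$ because $a_e$ appears as a factor of $P_i$. The second step is to confirm that $F^{(i)}$ lies in $\mathcal{F}_i$ in the strict sense, i.e.\ that its $(i+1)$-th coordinate is nonzero. For $i = 0$ this is $P_0 = 1$, so $F^{(0)} = \mathbf{1}$. For $0 < i < n$, connectedness of $G$ forces $E(S,T)$ to be nonempty, so $P_i$ is a nonempty product of (nonzero) edge labels; the integral-domain hypothesis then guarantees $P_i \neq 0$.

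No genuine obstacle arises here; the construction is a direct analogue of arguments used in the integer-spline case. The only minor subtleties are the empty-product convention at $i = 0$ (which makes the canonical element $\mathbf{1}$ of $\mathcal{F}_0$ emerge automatically) and the appeal to connectedness of $G$ to ensure $E(S,T)$ is nonempty for intermediate values of $i$.
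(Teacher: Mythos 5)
Your proof is correct, but your construction differs from the paper's. The paper builds an element of $\mathcal{F}_i$ supported at the single vertex $v_{i+1}$: it puts the product of the edge labels incident to $v_{i+1}$ there and $0$ everywhere else, so each edge condition is either $0\equiv 0$ or asserts that an incident label divides the product. You instead use the cut $(S,T)$ with $S=\{v_1,\dots,v_i\}$, making the spline constant equal to $P_i=\prod_{e\in E(S,T)}a_e$ on all of $T$; the edge conditions inside $S$ or $T$ are trivial and the crossing conditions hold because each $a_e$ divides $P_i$. Both arguments hinge on the same two facts --- the product of the relevant (nonzero) edge labels is nonzero in an integral domain, and divisibility of that product by each factor --- so neither is deeper than the other; yours has the small aesthetic advantage that the case $i=0$ falls out automatically as $F^{(0)}=\mathbf{1}$ via the empty product, while the paper handles $i=0$ and $i=n$ separately. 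One remark: your appeal to connectedness is not actually needed, since if $E(S,T)$ were empty your convention gives $P_i=1\neq 0$ and the crossing conditions are vacuous, so $F^{(i)}$ still lies in $\mathcal{F}_i$; and like the paper you implicitly assume the edge labels are nonzero, which is the standing convention needed for either construction.
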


\begin{proof}
First note that $\mathbf{0}=(0,\ldots,0)  \in \mathcal{F}_{n}$ and $\mathbf{1}=(1,\ldots,1) \in \mathcal{F}_{0}$, so they are non-empty.  For $0 < i < n$, let $F =(f_1, \ldots, f_n)\in \R^n$ be defined as follows. Let  $f_{j}=0$ if $j \neq i+1$ and let $f_{i+1}$ be the product of the edge labels incident to vertex $v_{i+1}$.  Consequently $F$ has the form $F =(0, \dots,0, f_{i+1},0\ldots,0)$, with $f_{i+1} = a_{{i}_{1}} a_{{i}_{2}} \cdots a_{{i}_{k}}$ where $e_{{i}_{1}}, e_{{i}_{2}}, \ldots, e_{{i}_{k}}$ are the edges incident to vertex $v_{i + 1}$.  

Next, we show that $F$ is a spline.  If  edge $e_p$ is is not incident to vertex $i+1$ then the defining equation for that edge will be  $0 \equiv 0 \bmod a_p$, which is always true.  For edges $e_{{i}_r}$ incident to vertex $v_{i+1}$, the defining equation will be  $f_{i+1}\equiv 0 \bmod a_{{i}_r}$.  Since $f_{i+1} = a_{{i}_1} a_{{i}_2} \cdots a_{{i}_k}$, we have that $a_{{i}_r} | f_{i+1}$ for each $1 \leq r  \leq k$, so the equation holds.  Since $F$ satisfies the spline equations for each edge, $F \in R_G$ and since $F$ has exactly $i$ leading zeros, we have that $F \in \mathcal{F}_i$. Hence for each $0 \leq i \leq n$, $\mathcal{F}_i$ is non-empty.
\end{proof}

We define the leading term of a non-zero spline $F$ to be the first non-zero entry of $F$.

\begin{definition}
Let $F \in \mathcal{F}_{i}$,  where $1 \leq i < n$, so $F = (0, 0, \ldots, f_{i+1}, \ldots, f_{n})$.  We define the \textbf{leading term} of $F$,  $LT(F) = f_{i+1}$, the first non-zero entry of $F$.
\end{definition}

Let $L(\mathcal{F}_i)= \left\{ LT(F): F \in \mathcal{F}_i \right\}$.  

\begin{lemma} The set of all leading terms of splines in $\mathcal{F}_i$, $L(\mathcal{F}_i)$, is an ideal of $R$. \end{lemma}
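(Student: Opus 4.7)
The plan is to verify the two defining properties of an ideal---closure under addition and closure under multiplication by elements of $R$---by exploiting the $R$-module structure of $R_G$. The key observation is that the set $\widetilde{\mathcal{F}}_i$ of all splines whose first $i$ entries vanish (equivalently, $\bigcup_{j \geq i} \mathcal{F}_j$) is an $R$-submodule of $R_G$, since sums and scalar multiples clearly preserve leading zeros. Coordinate projection $\pi_{i+1}\colon \widetilde{\mathcal{F}}_i \to R$, sending $F \mapsto f_{i+1}$, is then an $R$-module homomorphism, and its image is exactly $L(\mathcal{F}_i) \cup \{0\}$. Since the image of any $R$-module homomorphism into $R$ is an ideal, the result follows.

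Unpacking this at the element level: given $a, b \in L(\mathcal{F}_i)$, realized as $LT(F)$ and $LT(G)$ for some $F, G \in \mathcal{F}_i$, the sum $F+G$ is a spline with first $i$ entries zero and $(i+1)$st entry $a+b$. If $a+b \neq 0$, then $F+G \in \mathcal{F}_i$ and $a+b = LT(F+G) \in L(\mathcal{F}_i)$; otherwise $F+G \in \mathcal{F}_j$ for some $j > i$ and we have produced the zero element. Closure under scalar multiplication is handled the same way: for $r \in R$ and $F \in \mathcal{F}_i$ with $LT(F) = a$, the spline $rF$ has first $i$ entries zero and $(i+1)$st entry $ra$, placing $ra$ in $L(\mathcal{F}_i)$ (or giving $0$ if $ra = 0$).

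The only real subtlety---and the main point at which one must be careful---is that $\mathcal{F}_i$ as defined consists of splines with \emph{exactly} $i$ leading zeros, so strictly speaking $0 \notin L(\mathcal{F}_i)$ unless we agree to include it. One either adopts the convention that the ideal in question is $L(\mathcal{F}_i) \cup \{0\}$, or phrases the whole argument through the submodule $\widetilde{\mathcal{F}}_i$ and its projection, as above; this is the cleanest framing because closure and the presence of $0$ become automatic consequences of $\pi_{i+1}$ being $R$-linear. Once this bookkeeping is handled, the ideal axioms follow immediately from the linearity of the coordinate projection, and no properties of $R$ beyond having a well-defined multiplication are needed.
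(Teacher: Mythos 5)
Your proof is correct and follows essentially the same route as the paper: closure of $L(\mathcal{F}_i)$ under addition and under multiplication by elements of $R$ is checked directly from the fact that sums and scalar multiples of splines in $\mathcal{F}_i$ land in some $\mathcal{F}_j$ with $j \geq i$, with the zero element handled by convention (the paper's proof likewise treats $LT(F)+LT(H)=0$ as lying in $L(\mathcal{F}_i)$). Your reformulation via the submodule $\widetilde{\mathcal{F}}_i$ and the coordinate projection is just a cleaner packaging of the same argument, and your explicit remark that $0$ must be adjoined to $L(\mathcal{F}_i)$ under the paper's strict definition of flow-up classes is a fair point the paper glosses over.
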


\begin{proof} 
 Let $F$ and $H$ be splines in $\mathcal{F}_i$. Then $LT(F) + LT(H)$ is either the leading term of an element of $\mathcal{F}_i$ or it's zero, hence it is in $L(\mathcal{F}_i)$.   If $r \in R$, and $r \neq 0$, then $rLT(F)= LT(rF) \in L(\mathcal{F}_i)$. Thus $L(\mathcal{F}_i)$ is an ideal of $R$. 
\end{proof}

When $R$ is a PID, we can define the notion of a \textbf{minimal element} of a flow-up class.

\begin{definition} Let $R$ be a PID. A spline $B \in \mathcal{F}_i$ is called a \textbf{minimal element} of $\mathcal{F}_i$ if $LT(B)$ is a generator of the ideal $L(\mathcal{F}_i)$.
\end{definition}

Note that minimal elements are not in general unique, but their leading terms are unique up to multiplication by a unit. 

We now show that when we take a minimal element from each flow-up class $\mathcal{F}_i$, where $0 \leq i < n$, we will always get a flow-up class basis for $R_G$.  First, we need a lemma. 

\begin{lemma}\label{lem:flowup}
Let $F \in \mathcal{F}_i$, and let $B$ be a minimal element of $\mathcal{F}_i$. There exists $r \in R$ such that $F-rB\in \mathcal{F}_k$ for some $k>i$.
\end{lemma}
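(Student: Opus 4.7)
The plan is to exploit the fact that, by the preceding lemma, $L(\mathcal{F}_i)$ is an ideal of $R$, and since $R$ is a PID this ideal is principal, generated by $LT(B)$ by definition of $B$ being a minimal element. This immediately gives a divisibility condition on $LT(F)$ that pinpoints the required scalar $r$.

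First, I would observe that $F \in \mathcal{F}_i$ implies $LT(F) \in L(\mathcal{F}_i) = \langle LT(B) \rangle$, so there exists $r \in R$ with $LT(F) = r \cdot LT(B)$. I would take this $r$ as the candidate scalar and consider $F - rB$. Since both $F$ and $B$ have their first $i$ entries equal to zero, the spline $F - rB$ also has its first $i$ entries equal to zero. The $(i+1)$-st entry is $LT(F) - r \cdot LT(B) = 0$ by the choice of $r$, so $F - rB$ has at least $i+1$ leading zeros.

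It remains to note that $R_G$ is closed under $R$-linear combinations (this uses that $R_G$ is an $R$-module, which follows since integer scalar multiplication extends to $R$-scalar multiplication coordinatewise, and the congruence conditions are preserved by multiplication by any element of $R$). Hence $F - rB \in R_G$. Either $F - rB = \mathbf{0}$, in which case $F - rB \in \mathcal{F}_n$, or $F - rB$ is a nonzero spline with exactly $k$ leading zeros for some $k \geq i+1$, placing it in $\mathcal{F}_k$ with $k > i$ as required.

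There is essentially no obstacle here: the entire content of the lemma is that minimality of $LT(B)$ in the ideal $L(\mathcal{F}_i)$ lets us cancel the leading term of $F$ by subtracting a scalar multiple of $B$. The only point to be a bit careful about is confirming that $F - rB$ remains a spline (which is immediate from module closure) and that the partition convention places the resulting spline in a flow-up class with strictly more leading zeros, which is exactly the statement $k > i$.
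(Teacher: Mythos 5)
Your proposal is correct and follows essentially the same argument as the paper: use minimality of $LT(B)$ as a generator of the ideal $L(\mathcal{F}_i)$ to write $LT(F)=r\,LT(B)$, and then note that $F-rB$ has at least $i+1$ leading zeros, hence lies in $\mathcal{F}_k$ for some $k>i$. Your additional remarks (closure of $R_G$ under $R$-linear combinations and the zero-spline case landing in $\mathcal{F}_n$) only make explicit details the paper leaves implicit.
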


\begin{proof}
Since $LT(F) \in <LT(B)>$, there exists some $r \in R$ such that $LT(F)=rLT(B)$. Let $J=F-rB$. Then the $i+1$st component of $J$ will be $0$, so $J$ has at least $i+1$ leading zeros. Thus $J\in \mathcal{F}_k$ for some $k>i$. 
\end{proof}

\begin{theorem} \label{fcbasis}
Let $R$ be a PID and $(G,A)$ be an edge-labeled graph with $n$ vertices. Then $R_G$ is a free $R$-module, and it has a flow-up class basis. Moreover, if  $\mathcal{B}=\{ B_1, \dots, B_n\} \subset R_G$ where each $B_i \in \mathcal{F}_i$ is minimal, then $\mathcal{B}$ is a flow-up class basis. 
\end{theorem}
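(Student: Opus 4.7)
The plan is to prove simultaneously that a set $\mathcal{B}$ of the stated form exists, that any such $\mathcal{B}$ is a basis, and that $R_G$ is therefore free. For existence, I would first invoke the preceding nonemptiness lemma to see that each $\mathcal{F}_i$ with $0 \leq i < n$ is nonempty, so by the ideal lemma $L(\mathcal{F}_i)$ is a nonzero ideal of $R$. Since $R$ is a PID, $L(\mathcal{F}_i)$ is principal, generated by some nonzero $b_i$, and I would then pick any $B_i \in \mathcal{F}_i$ with $LT(B_i) = b_i$; by definition this is a minimal element.

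Next I would show that $\mathcal{B}$ spans $R_G$ by iterating Lemma~\ref{lem:flowup}. Given $F \in R_G$, either $F = \mathbf{0}$ and we are done, or $F \in \mathcal{F}_{j_0}$ for a unique $j_0 < n$. The lemma produces $r_{j_0} \in R$ with $F - r_{j_0} B_{j_0} \in \mathcal{F}_{j_1}$ for some $j_1 > j_0$. Repeating the argument on $F - r_{j_0} B_{j_0}$, we build a strictly increasing sequence of flow-up indices $j_0 < j_1 < \cdots$ bounded above by $n$, so the process terminates at $\mathcal{F}_n = \{\mathbf{0}\}$ in at most $n$ steps. Unwinding yields $F = \sum_k r_{j_k} B_{j_k}$, so $\mathcal{B}$ generates $R_G$.

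For linear independence, I would suppose $\sum_{i} r_i B_i = \mathbf{0}$ is a nontrivial relation and let $j$ be the smallest index with $r_j \neq 0$. Since each $B_i \in \mathcal{F}_i$ has at least $i$ leading zeros, every $B_i$ with $i > j$ contributes $0$ to the $(j+1)$st coordinate, while $B_j$ contributes $LT(B_j)$ there. Thus the $(j+1)$st coordinate of the relation equals $r_j \, LT(B_j) = 0$, and since $LT(B_j) \neq 0$ by minimality and $R$ is an integral domain, $r_j = 0$, a contradiction. Hence $\mathcal{B}$ is linearly independent, it is a flow-up class basis, and in particular $R_G$ is free.

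The main obstacle is essentially bookkeeping rather than any deep analytic point: one must confirm that the descent in Lemma~\ref{lem:flowup} genuinely terminates (which relies on the bound $n$ on the flow-up index and the convention $\mathcal{F}_n = \{\mathbf{0}\}$) and reconcile the indexing so that exactly one minimal $B_i$ is drawn from each nonzero flow-up class $\mathcal{F}_0, \ldots, \mathcal{F}_{n-1}$. The PID hypothesis itself enters at only two points, both of them essential: it guarantees that each $L(\mathcal{F}_i)$ is principal so a minimal element can be defined, and it supplies the scalar $r$ in Lemma~\ref{lem:flowup} uniformly throughout the descent.
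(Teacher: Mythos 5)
Your proposal is correct and follows essentially the same route as the paper: choose a minimal element of each nonzero flow-up class (using that each $L(\mathcal{F}_i)$ is a nonzero principal ideal), get linear independence from the triangular structure of the leading terms, and get spanning by iterating Lemma~\ref{lem:flowup} until the index reaches $n$, which is exactly the paper's downward induction phrased as a descent. The only cosmetic differences are that you deduce freeness from the explicit basis rather than citing the structure theorem, and you use a first-nonzero-coordinate argument in place of the nonzero determinant of the lower triangular matrix.
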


\begin{proof}
By the structure theorem for modules over a PID, $R_G$ is free for all $G$. Let $B_i \in \mathcal{F}_i$ be such that $LT(B_i)$ generates $<L(\mathcal{F}_i)>$. Note that the matrix $\begin{bmatrix} B_1 &\cdots &B_{n} \end{bmatrix}$ is a lower triangular matrix where the diagonal entries are the leading terms of the $B_i$'s. Since the leading terms are each non-zero, the determinant will be non-zero. It follows that $\mathcal{B}$ is linearly independent over $R$. We will show by induction that for every $H \in R_G$, $H \in span(\mathcal{B}_i)$. Note that the only element of $\mathcal{F}_n$, the zero spline, is trivially in the span.    

Suppose $k > 0$ and that whenever  $0 \leq j < k$, $\mathcal{F}_{n-j} \subset span(\mathcal{B})$.  Let $H \in \mathcal{F}_{n-k}$.  By Lemma \ref{lem:flowup}, there exists some $q \in R$ and $i$ such that $n-k<n-i\leq n$ so that $F = H-qB_{n-k}  \in \mathcal{F}_{n-i}$.  Thus  $H=F+qB_i$ and $0 \leq i < k$.  By the induction hypothesis, $F \in span(\mathcal{B})$, so $H \in span(\mathcal{B})$. By induction we have shown that any for any $S \in R_G$, $S \in span(\mathcal{B})$. Since $\mathcal{B}$ is a linearly independent set that spans $R_G$, it is a basis.
\end{proof}

Note that the only property of splines that we used in this theorem is that $R_G$ is a submodule of $R^n$ with non-empty flow-up classes. Thus, this theorem can be stated more generally. If some of the flow-up classes are empty, the result will still hold, i.e., submodules of spline modules will also have flow-up class bases. In fact, when $R$ is a Euclidean domain we can construct the leading terms explicitly in terms of the edge labels. See \cite{jefflauren} for details. 

\section{Determinants} 

When $R$ is an integral domain and $M$ is a finitely generated submodule of $R^n$, we have a nice criterion for when an arbitrary set of $n$ elements will form a basis.

Throughout this section, we will need to refer to the determinant of a matrix whose columns are elements of a subset of $M$. To simplify notation, we make the following definition:

\begin{definition}
Let  $C=\left\{C_1, \ldots C_n \right\}$ be an ordered subset of $M$. We define 
$\det(C)=det \begin{bmatrix} C_1,\ldots,C_n \end{bmatrix}$. \end{definition}

\begin{theorem}\label{basis-theorem} Let $R$ be an integral domain, $M$ a finitely generated rank $n$ submodule of $R^n$, and suppose that $M$ is free with basis $\mathcal{B}=\left\{B_1,.,.B_n\right\}$. Then  $\mathcal{C}=\left\{C_1, \ldots, C_n\right\} \subset M$ is another basis for $M$ if and only if $\det(\mathcal{C}) = u\det(\mathcal{B})$, where u is a unit in $R$. \end{theorem}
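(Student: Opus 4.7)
The plan is a standard change-of-basis argument. Since $\mathcal{B}$ is a basis of $M$, each $C_j$ can be written uniquely as $C_j = \sum_{i=1}^n p_{ij} B_i$ with $p_{ij} \in R$; assembling the $p_{ij}$ into a matrix $P \in M_n(R)$ yields the identity $[C_1,\ldots,C_n] = [B_1,\ldots,B_n]\,P$, and hence $\det(\mathcal{C}) = \det(\mathcal{B})\det(P)$ by multiplicativity of the determinant. A preliminary observation I would record before either direction is that $\det(\mathcal{B}) \ne 0$: since $M$ has rank $n$ inside $R^n$, extending scalars to the fraction field $K$ of $R$ makes $M \otimes_R K$ an $n$-dimensional $K$-subspace of $K^n$, hence all of $K^n$, and $\mathcal{B}$ becomes a $K$-basis of $K^n$, forcing $\det(\mathcal{B})$ to be a nonzero element of $K$ (and thus of $R$).

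For the forward direction, if $\mathcal{C}$ is also a basis of $M$, then by symmetry there is a matrix $Q \in M_n(R)$ with $[B_1,\ldots,B_n] = [C_1,\ldots,C_n]\,Q$. Substituting and cancelling $[B_1,\ldots,B_n]$ (which is invertible over $K$ by the preliminary observation) gives $PQ = I$, so $\det(P)\det(Q) = 1$ and $\det(P)$ is a unit $u \in R$. Therefore $\det(\mathcal{C}) = u\det(\mathcal{B})$.

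For the converse, suppose $\det(\mathcal{C}) = u\det(\mathcal{B})$ with $u$ a unit. Cancelling the nonzero $\det(\mathcal{B})$ in the integral domain $R$ forces $\det(P) = u$. Since any square matrix over $R$ with unit determinant is invertible in $M_n(R)$ via the adjugate formula $P^{-1} = \det(P)^{-1}\,\mathrm{adj}(P)$, we obtain $P^{-1} \in M_n(R)$. This lets us write each $B_i$ as an $R$-linear combination of the $C_j$, so $\mathcal{C}$ spans $M$; and any dependence $\sum_j \alpha_j C_j = 0$ rewrites as $\sum_i \bigl(\sum_j p_{ij}\alpha_j\bigr) B_i = 0$, which by independence of $\mathcal{B}$ gives $P\alpha = 0$ and hence $\alpha = 0$ by invertibility of $P$. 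Thus $\mathcal{C}$ is a basis.

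The one place that requires care, rather than routine bookkeeping, is the preliminary nonvanishing $\det(\mathcal{B}) \ne 0$; it is what permits both the cancellation of $\det(\mathcal{B})$ in the converse and the left-cancellation of $[B_1,\ldots,B_n]$ over $K$ in the forward direction. Everything else reduces to the elementary fact that over an integral domain, a square matrix is invertible over the ring precisely when its determinant is a unit.
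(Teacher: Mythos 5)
Your proof is correct and follows essentially the same route as the paper's: express $\mathcal{C}$ in terms of $\mathcal{B}$ via a change-of-basis matrix, use multiplicativity of the determinant, and invoke the fact that a matrix over $R$ is invertible over $R$ exactly when its determinant is a unit. The only difference is cosmetic care: you explicitly justify $\det(\mathcal{B})\neq 0$ via the rank-$n$ hypothesis and passage to the fraction field, a point the paper asserts without proof.
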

	
\begin{proof}

First assume that $\mathcal{C}=\left\{ C_1, \cdots, C_n \right\}$ is a basis for $R^n$. Since $\mathcal{B}$ is also a basis for $R^n$, for each $C_i \in \mathcal{C}$, there exist $a_{ij} \in R$ with $1\leq i,j \leq n$ so that $C_i = \sum_{j=1}^n a_{ij}B_j$. If we define $A= \begin{bmatrix} a_{ij} \end{bmatrix}$, the matrix with $ij$th entry equal to $a_ij$, $C=\begin{bmatrix} C_1,\ldots, C_n \end{bmatrix}$ and $B= \begin{bmatrix} B_1,\ldots,B_n \end{bmatrix}$ then $C=AB$. Similarly, since $\mathcal{C}$ is a basis for $R^n$, there exists a matrix $A'$ so that $B=A'C$. 
Then $C=AB=AA'C$. Since $\mathcal{C}$ is a basis, $\det(\mathcal{C}) \neq 0$,  hence $\det(A)\det(A')=1$. This implies that $\det(A)$ is a unit in $R$ and since $\det(C)=\det(A) \det (B)$, the result follows. 

Now assume that $\det(C)=u\det(B)$ for some unit $u \in R$. 
Since $\mathcal{B}$ is a basis for $R^n$, there exists some matrix $A$ so that $C=AB$.  Then $\det(C)=\det(A)\det(B)$, and since $\det(C)=u\det(B)$, we have that $u\det(B)=\det(C)=\det(A)\det(B)$. Since $\mathcal{B}$ is a bais, $\det(B) \neq 0$, and it follows that $\det(A)$ is a unit. Thus there exists some matrix $A'$ so that $A'A=I$ and so $A'C=A'AB=B$ so $A'C=B$. Since $\mathcal{B}$ is a basis, so is $\mathcal{C}$.  
\end{proof} 

We can now interpret this theorem in terms of spline modules. 

\begin{corollary}  \label{pid} Let $R$ be a PID and let $(G,A)$ be an arbitrary edg-labeled graph with $n$ vertices. Furthermore, suppose that $\mathcal{B}=\left\{B_1,.,.B_n\right\}$ is a flow-up class basis for $R_G$. For each $i$, let $l_i$ be the leading term of $B_i$ and define $Q=l_1\cdots l_n$. Then $\mathcal{C}=\left\{C_1,.,.C_n\right\}$ is a basis for $R_G$ if and only if $\det(\mathcal{C}) = uQ$ for some unit $u \in R$.
\end{corollary}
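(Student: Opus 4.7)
The plan is to reduce this corollary directly to Theorem \ref{basis-theorem} by computing $\det(\mathcal{B})$ explicitly. First, I would verify that the hypotheses of Theorem \ref{basis-theorem} are satisfied: since $R$ is a PID, Theorem \ref{fcbasis} guarantees that $R_G$ is a free $R$-module, and by assumption it has a basis of size $n$, so $R_G$ is a finitely generated rank-$n$ free submodule of $R^n$.

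The key computation is that $\det(\mathcal{B}) = Q$. Because $\mathcal{B}$ is a flow-up class basis, after ordering the $B_i$ according to their leading class indices, each $B_i$ has the form $B_i = (0,\ldots,0, l_i, *, \ldots, *)$, where the zeros occupy the components to the left of the $i$th diagonal position. Consequently, the matrix $[B_1, \ldots, B_n]$ is lower triangular with diagonal entries $l_1, l_2, \ldots, l_n$, and so
\[
\det(\mathcal{B}) = l_1 l_2 \cdots l_n = Q.
\]

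With $\det(\mathcal{B})=Q$ in hand, Theorem \ref{basis-theorem} (applied with $M = R_G$) says that $\mathcal{C} \subset R_G$ is a basis if and only if $\det(\mathcal{C}) = u\det(\mathcal{B})$ for some unit $u \in R$. Substituting the computed value of $\det(\mathcal{B})$ yields the desired equivalence: $\mathcal{C}$ is a basis if and only if $\det(\mathcal{C}) = uQ$ for some unit $u \in R$.

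There is really no substantive obstacle here; the corollary is a direct packaging of Theorem \ref{basis-theorem} together with the triangularity observation already noted in the proof of Theorem \ref{fcbasis}. The only care needed is to be explicit about the ordering convention of the $B_i$ so that the matrix is genuinely lower triangular and its determinant reads off as the product of leading terms.
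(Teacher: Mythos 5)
Your proof is correct and follows essentially the same route as the paper's own argument: compute $\det(\mathcal{B}) = l_1\cdots l_n = Q$ from the lower triangular form of the flow-up class basis, then invoke Theorem \ref{basis-theorem} to get the equivalence $\det(\mathcal{C}) = uQ$. Your added care about the ordering convention and verifying the hypotheses of Theorem \ref{basis-theorem} is a welcome bit of explicitness, but it does not change the substance of the argument.
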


\begin{proof}
First note that a flow-up class basis for $R_G$ is guaranteed to exist by Theorem \ref{fcbasis}. If $\mathcal{B}$ is a flow-up class basis with leading terms $l_i$, then $\det(\mathcal{B})=l_1 \cdots l_n=Q$. Let $\mathcal{C}=\left\{C_1, \ldots, C_n \right\}$. By Theorem \ref{basis-theorem}, $\mathcal{C}$ is a basis if and only $\det(\mathcal{C})=u\det(\mathcal{B})=uQ$ for some unit $u \in R$. 

\end{proof}

We would like similar criteria to determine whether a set of elements form a basis of $R_G$, even when no flow-up class basis exists. For example, we will prove later that if $R$ is not a PID, flow-up class bases need not exist, even if $R_G$ is free.

For splines over integral domains, we provide a theoretical framework for such criteria, and for GCD domains over graphs with relatively prime edge labels, we provide an explicit construction of what $Q$ needs to be in this case.

\begin{lemma} \label{Cramer} Let $R$ be an integral domain. Suppose  $\det \begin{bmatrix} B_1, \ldots, B_n \end{bmatrix} = Q$, where $B_1, \ldots, B_n \in R_G$. Then $QR^n \subset  span\left\{B_1,\ldots,B_n\right\}$.
\end{lemma}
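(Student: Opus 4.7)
The plan is to use the classical adjugate identity $M \cdot \mathrm{adj}(M) = \det(M) \cdot I$, which holds for any square matrix $M$ over a commutative ring. Taking $M = [B_1,\ldots, B_n]$, the matrix whose columns are the given splines, this identity says that $Q \cdot e_i$ (the $i$th column of $Q \cdot I$) equals $M$ times the $i$th column of $\mathrm{adj}(M)$. Since the entries of $\mathrm{adj}(M)$ are $(n-1) \times (n-1)$ minors of $M$, hence elements of $R$, this expresses each $Q e_i$ as an explicit $R$-linear combination of $B_1, \ldots, B_n$.

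Concretely, I would argue as follows. First dispose of the trivial case $Q = 0$, in which $QR^n = \{0\}$ lies in any submodule. Assume $Q \neq 0$. Let $A_{ij}$ denote the $(i,j)$ cofactor of $M$, so that $\mathrm{adj}(M)$ has $(i,j)$ entry $A_{ji}$. The adjugate identity gives, for each $i \in \{1,\ldots,n\}$,
\begin{equation*}
Q \, e_i \;=\; M \cdot \bigl(A_{i1}, A_{i2}, \ldots, A_{in}\bigr)^{T} \;=\; \sum_{j=1}^{n} A_{ij} B_j,
\end{equation*}
which shows $Q e_i \in \mathrm{span}\{B_1,\ldots,B_n\}$. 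Since $\{Q e_1, \ldots, Q e_n\}$ generates $QR^n$ as an $R$-module, this yields $QR^n \subset \mathrm{span}\{B_1,\ldots,B_n\}$, as required.

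Alternatively, one can phrase the same argument via Cramer's rule applied to the system $Mx = Qv$ for an arbitrary $v \in R^n$: since $R$ is an integral domain and $Q \neq 0$, working in the fraction field one finds $x_j = \det(M_j(v))$, where $M_j(v)$ replaces the $j$th column of $M$ with $v$. The key point is that each $x_j$ lies in $R$ (not merely the fraction field), because $\det(M_j(v))$ is a polynomial expression with integer coefficients in the entries of $M$ and $v$, all of which are in $R$. This reproves the claim for arbitrary $v \in R^n$.

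There is no real obstacle here beyond noting that the adjugate identity holds over any commutative ring, so the argument never leaves $R$; the hypothesis that $R$ is an integral domain is needed only to ensure that the $Q \neq 0$ case is nondegenerate (so that $M$ is invertible over the fraction field) and is harmless in the trivial $Q = 0$ case.
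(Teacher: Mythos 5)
Your proposal is correct, and it rests on the same underlying determinantal fact as the paper's proof, but in a slightly different and in fact more general form. The paper fixes a target vector $D$, solves $Bx=QD$ by Cramer's rule, and cancels the factor $Q$ in the numerator against $\det(B)=Q$ in the denominator to conclude each $x_i\in R$; this forces the preliminary split into the cases $\det(B)=0$ and $\det(B)\neq 0$ and implicitly works in the fraction field of $R$. Your primary argument instead applies the adjugate identity $M\,\mathrm{adj}(M)=\det(M)I$ to write each generator $Qe_i$ of $QR^n$ as $\sum_{j} A_{ij}B_j$ with cofactor coefficients, which are visibly in $R$. What this buys: no division, no case analysis (the identity holds verbatim when $Q=0$), and no use of the integral-domain hypothesis at all, so your version of the lemma is valid over an arbitrary commutative ring, whereas the paper's write-up needs $R$ a domain (or at least cancellation by $Q$) to run the Cramer computation. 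Your closing "alternative" phrasing via Cramer's rule is essentially the paper's own proof, so nothing is missing; the only small inaccuracy is your remark that the integral-domain hypothesis is needed to make the $Q\neq 0$ case nondegenerate --- in the adjugate formulation it is not needed even there, since the coefficients are produced without ever inverting $M$ or passing to the fraction field.
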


\begin{proof} Let $B=[B_1,\ldots,B_n]$. If $B_1, \cdots B_n$ are not linearly independent, then $Q=det(B)=0$ and so $QR^n=\textbf{0} \in span \left\{B_1,\ldots,B_n\right\}$. Now suppose that the $B_i$ are linearly independent and let $D \in R_G$. We will show that there exists some $x_1,\ldots, x_n \in R$ such that:

\begin{equation*}
\begin{pmatrix}
b_{11} & b_{12} & \cdots & b_{1n} \\
b_{21} & b_{22} & \cdots & b_{2n} \\
\vdots  & \vdots  & \ddots & \vdots  \\
b_{n1} & b_{n2} & \cdots & b_{nn} 
\end{pmatrix} \begin{pmatrix} x_1 \\ x_2 \\ \vdots \\ x_n \end{pmatrix}= \begin{pmatrix} Qd_{1} \\ Qd_{2} \\ \vdots \\ Qd_{n} \end{pmatrix} 
\end{equation*}

Using Cramer's rule, we have that for each $1 \leq i \leq n$,

\begin{align*}
x_i  & =\frac{\begin{vmatrix} 
b_{11} & \cdots & Qd_{1}  & \cdots & b_{1n} \\
b_{21} & \cdots & Qd_{2} & \cdots & b_{2n} \\
\vdots & \cdots &  \vdots  & \ddots & \vdots \\
b_{n1} & \cdots & Qd_{n} & \cdots & b_{nn} \end{vmatrix}}{|B|} 
=
\frac{Q \begin{vmatrix} b_{11} & \cdots & d_1 & \cdots & b_{1n} \\
b_{21} & \cdots &  d_2 & \cdots & b_{2n} \\
\vdots  & \cdots  & \vdots & \ddots & \vdots  \\
b_{n1} & \cdots & d_n & \cdots & b_{nn} \end{vmatrix}}{Q} \\
& =\begin{vmatrix} b_{11} & \cdots & d_1 & \cdots & b_{1n} \\
b_{21} & \cdots &  d_2 & \cdots & b_{2n} \\
\vdots  & \cdots  & \vdots & \ddots & \vdots  \\
b_{n1} & \cdots & d_n & \cdots & b_{nn} \end{vmatrix}
\end{align*}

Since each of the entries in this last matrix is in $R$, $x_i \in R$. Hence $QD \in span \left\{ B_1, \cdots, B_n \right\} $. Since $D$ was arbitrary, $QR^n \subset span \left\{B_1,\cdots, B_n \right\}$. 

\end{proof}

If we can find an element $Q$ that divides the determinants of all $n$-element subsets of $R_G$, this will give us a determinantal criterion in one direction. 

\begin{theorem} \label{det-theorem} Let $R$ be an integral domain. Suppose there exists some $Q \in R$ so that $Q|\det \begin{bmatrix} C_1, \ldots C_n \end{bmatrix}$ for any $\left\{C_1, \ldots C_n\right\} \subset R_{G}$. Then if  $B=\left\{B_1, \ldots B_n\right\}$ is any subset of $R_G$ for which $\det (B)= Q$, then $B$ is a basis for $R_{G}$.
\end{theorem}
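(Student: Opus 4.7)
The plan is to show directly that $B = \{B_1, \ldots, B_n\}$ is both linearly independent and spanning in $R_G$. Since $R_G$ is a rank-$n$ submodule of $R^n$ (Lemma 1 produces a linearly independent family, one element from each flow-up class), the divisibility hypothesis applied to any such family forces $Q \neq 0$. Linear independence of $B$ is then immediate from $\det(B) = Q \neq 0$.

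For spanning, I would pick an arbitrary $D \in R_G$ and apply Lemma \ref{Cramer} to the set $B$: since $\det(B) = Q$, the lemma furnishes coefficients $x_1, \ldots, x_n \in R$ with $QD = \sum_i x_i B_i$, where by Cramer's rule the $x_i$ are determinants of the matrix $[B_1, \ldots, B_n]$ with the $i$th column replaced by $QD$. Pulling the factor $Q$ out of that column and cancelling the $Q$ in the denominator (this is exactly the cancellation performed in the proof of Lemma \ref{Cramer}) leaves
\[
 x_i \;=\; \det\bigl[B_1,\, \ldots,\, B_{i-1},\, D,\, B_{i+1},\, \ldots,\, B_n\bigr].
\]
The key observation is that each such matrix has all its columns in $R_G$, so the hypothesis on $Q$ applies and gives $Q \mid x_i$ for every $i$. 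Writing $x_i = Q y_i$ with $y_i \in R$, we have $Q D = Q \sum_i y_i B_i$, and cancellation in the integral domain $R$ (legal because $Q \neq 0$) yields $D = \sum_i y_i B_i$. Hence $D \in \mathrm{span}_R\{B_1, \ldots, B_n\}$, and $B$ is a basis.

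The proof is short once Lemma \ref{Cramer} is in hand, and I do not anticipate a real obstacle. The one thing to notice, and what makes the theorem work, is that the universal divisibility hypothesis on $Q$ is tailor-made to promote the weaker inclusion $QR^n \subset \mathrm{span}_R(B)$ delivered by Cramer to the full inclusion $R_G \subset \mathrm{span}_R(B)$, by absorbing each extraneous factor of $Q$ back into the expansion coefficients.
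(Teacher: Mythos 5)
Your proposal is correct and follows essentially the same route as the paper: linear independence from $\det(B)=Q\neq 0$, then spanning by combining Lemma \ref{Cramer} with the universal divisibility hypothesis to show each coefficient is divisible by $Q$ and cancelling $Q$ in the integral domain. The only (cosmetic) difference is that you read the coefficient formula $x_i=\det[B_1,\ldots,B_{i-1},D,B_{i+1},\ldots,B_n]$ directly off the Cramer computation inside the lemma's proof, whereas the paper re-derives the same divisibility via a chain of determinant identities ($s_iQ=Q\det[B_1,\ldots,F,\ldots,B_n]$); your explicit justification that $Q\neq 0$ via the nonempty flow-up classes is a small improvement on the paper's unexplained assertion.
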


\begin{proof} 
Suppose that $\det \begin{bmatrix} B_1,\ldots,B_n \end{bmatrix}=Q$. Note that $Q \neq 0$,  so $\left\{ B_1,\ldots,B_n \right\}$ must be linearly indpendent over $R$. By Lemma \ref{Cramer}, $QR^n \subset span\left\{ B_1,\ldots, B_n \right\}$. Let $F \in R_G$. Then $QF \in span \left\{ B_1,\ldots, B_n \right\} $ so $QF=\sum_{i=1}^n s_iB_i$ where each $s_i \in R$. Then 
\begin{align*}
s_iQ  = &  s_i\det[B_1,\ldots,B_n] \\
= & \det[B_1,\ldots,B_{i-1},s_i B_i, B_{i+1},\ldots,B_n] \\
= & \det[B_1, \ldots, B_{i-1}, \sum s_j B_j, B_{i+1},\ldots, B_n] \\
= & \det[B_1,\ldots,B_{i-1}, QF, B_{i+1}, \ldots,B_n]\\
= & Q  \det[B_1,\ldots,B_{i-1}, F, B_{i+1}, \ldots, B_n] 
\end{align*}

Since by hypothesis $Q$ divides the determinant of any set of $n$ elements in $R_G$, 

$ Q  \det[B_1,\ldots,B_{i-1}, F, B_{i+1}, \ldots, B_n] = Q^2r$, for some $r \in R$. Then $s_iQ = Q^2r$ implies $s_i = Qr$, so $Q$ divides $s_i$, and this is true for each $i$.
  
  \

Thus $F=\sum(s_i/Q)B_i \in span \left\{B_1,\ldots,B_n \right\}$. So $\mathcal{B}$ spans $R_G$, and hence is a basis.
\end{proof}

When $R$ is a GCD domain over an arbitrary graph $G$, we can always find a non-unit $Q$ satifying the hypotheses of Theorem \ref{det-theorem}. Before we prove this lemma, it is worth noting that the definitions of a $\gcd$ and an $\lcm$ of two elements in a GCD domain are slightly different from the usual defintions in $\mathbb Z$, in that $\gcd$'s and $\lcm$'s are not unique, and the notion of a smallest or largest element doesn't have an analog in an arbitrary GCD domain.

\begin{definition}
Let R be a GCD domain. Then
$d$ is a $gcd$ of $a$ and $b$ if and only if the following two conditions hold:
\begin{enumerate} 
\item 
$d|a$ and $d|b$ 
\item if $c|a$ and $c|b$, then $c|d$. 
\end{enumerate}
\end{definition}

\begin{definition}
Let R be a GCD domain. Then
$l$ is an $lcm$ of $a$ and $b$ if and only if the following two conditions hold:
\begin{enumerate} 
\item 
$a|l$ and $b|l$ 
\item if $a|c$ and $b|c$, then $l|c$. 
\end{enumerate}
\end{definition} 

It follows from this definition that in a GCD domain $R$, if $d_1$ and $d_2$ are gcds of $a$ and $b$, then there exists a unit $u$ for which $ud_1=d_2$, and similarly for lcms. 

Furthermore, given $a,b \in R$ and $d$ a gcd of $a$ and $b$, an lcm $l$ of $a$ and $b$ can be chosen so that $dl = ab$. Equivalently, if $a$ and $b$ are nonzero elements and $d$ is any gcd $d$ of $a$ and $b$, then $ab/d$ is an lcm of $a$ and $b$. 

Throughout the rest of this paper, we will use the notation $\gcd(a,b)$ and $\lcm(a,b)$ with the understanding that they are only defined up to multiplication by units. Furthermore, given a gcd of $a$ and $b$, we will always choose an lcm so that $\gcd(a,b)=\frac{ab}{\lcm(a,b)}$. 

Most basic facts about gcd's and lcm's still hold in GCD domains, but these results must be proved without assuming either Bezout's Theorem or unique factorization. We provide proofs here for completeness.

\begin{lemma} \label{gcd(ax,bx)}Let $R$ be a GCD domain and let $a,b, x \in R$. Then $\gcd(ax,bx)=x\gcd(a,b)$. \end{lemma}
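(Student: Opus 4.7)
The plan is to verify directly that $x\gcd(a,b)$ satisfies both defining conditions for a gcd of $ax$ and $bx$. Fix any gcd $d$ of $a$ and $b$. The case $x=0$ is trivial since both sides are then $0$, so I would immediately reduce to $x\neq 0$.

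The ``divides both'' condition is routine: from $d\mid a$ and $d\mid b$, multiplying by $x$ gives $xd\mid ax$ and $xd\mid bx$. This handles half of the gcd definition.

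For the harder half I would show that every common divisor of $ax$ and $bx$ divides $xd$. The cleanest path is to pick any specific gcd $e$ of $ax$ and $bx$ and prove $e\mid xd$; since every common divisor of $ax,bx$ divides $e$, this suffices. The key observation is that $x$ itself is a common divisor of $ax$ and $bx$, so $x\mid e$, and we may write $e=xm$ for some $m\in R$. From $xm\mid ax$, using that $R$ is an integral domain and $x\neq 0$, we may cancel $x$ to conclude $m\mid a$; the same argument with $b$ in place of $a$ gives $m\mid b$. Hence $m$ is a common divisor of $a$ and $b$, so by the definition of $d=\gcd(a,b)$ we have $m\mid d$, and multiplying by $x$ yields $e=xm\mid xd$. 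This completes the verification.

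The main obstacle, though modest, is to respect the weaker hypotheses of a GCD domain: one cannot appeal to Bezout's identity or to unique factorization, so the only legitimate tools are the definition of gcd via divisibility and the cancellation property of an integral domain. In particular, the step ``$x\mid e$, hence $e=xm$'' uses only the universal property of $e$, and the step ``$xm\mid ax \Rightarrow m\mid a$'' is precisely where the integral-domain hypothesis is needed.
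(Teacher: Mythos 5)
Your proof is correct and takes essentially the same route as the paper's: both avoid Bezout and unique factorization, arguing only from the divisibility definition of gcd together with cancellation of $x$ in the integral domain. The only cosmetic difference is that the paper factors the gcd $h$ of $(ax,bx)$ as $gx\cdot y$ and shows $y$ is a unit, whereas you factor it as $x\cdot m$ and show $m\mid\gcd(a,b)$ (and you handle $x=0$ explicitly, which the paper leaves implicit).
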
 

\begin{proof}
 Let $g=\gcd(a,b)$ and $h=\gcd(ax,bx)$. Since $g|a$ and $g|b$,  $gx|ax$ and $gx|bx$. By definition, $gx|\gcd(ax,bx)=h$. Thus there exists some $y \in R$ so that $gxy=h$. Since $h=\gcd(ax,bx)$, $h|ax$. Thus $gxy|ax$ and so $gy|a$. A similar argument shows that $gy|b$. Since $g=\gcd(a,b)$, $gy|g$ and so $y$ is a unit. But then $\gcd(a,b)xy=gxy=h=\gcd(ax,bx)$, so $\gcd(a,b)=\gcd(ax,bx)$. 
\end{proof}

\begin{lemma} \label{a|bc} Let $R$ be a GCD domain. Then for any $a,b,c \in R$, if $a|bc$ and $\gcd(a,b)=1$, then $a|c$ \end{lemma}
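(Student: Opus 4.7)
The plan is to deduce this directly from Lemma \ref{gcd(ax,bx)}, which is the preceding result and gives us the distributivity identity $\gcd(ax,bx) = x\gcd(a,b)$ in a GCD domain. The key observation is that this identity lets us compute $\gcd(ac,bc)$ explicitly in terms of $\gcd(a,b)$, which by hypothesis is $1$.

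First I would apply Lemma \ref{gcd(ax,bx)} with $x = c$ to obtain $\gcd(ac, bc) = c \cdot \gcd(a,b)$. Since we are assuming $\gcd(a,b) = 1$, this simplifies (up to a unit) to $\gcd(ac, bc) = c$. Next I would observe that $a$ divides both $ac$ (trivially) and $bc$ (by hypothesis), so $a$ is a common divisor of $ac$ and $bc$. By the defining property of the gcd (condition (2) of the definition of gcd), any common divisor of $ac$ and $bc$ must divide any gcd of them, so $a \mid \gcd(ac, bc) = c$, which is the desired conclusion.

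The only subtlety is that in a GCD domain gcds are only defined up to multiplication by a unit, so when we write $\gcd(ac,bc) = c$ we really mean that $c$ is \emph{a} gcd of $ac$ and $bc$. This is harmless, because divisibility by $a$ of any one representative of the gcd class is equivalent to divisibility by $a$ of any other, since they differ by a unit factor. There is no real obstacle here; the main point is simply that we have already done the work in Lemma \ref{gcd(ax,bx)}, and the present lemma is an immediate corollary. I would not expect this proof to exceed three or four lines once written out.
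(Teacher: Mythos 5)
Your proof is correct, but it takes a cleaner route than the paper does. Both arguments rest on Lemma \ref{gcd(ax,bx)}, yet they deploy it differently. The paper writes $ax=bc$, introduces the auxiliary gcd $g=\gcd(b,x)$, applies the lemma in the form $a\gcd(b,x)=\gcd(ab,ax)$, rewrites $\gcd(ab,bc)=b\gcd(a,c)$, cancels $g$ (legitimate since $R$ is a domain, though the proof does not remark on it and contains a typo, writing $d$ for $g$), and finally shows the leftover cofactor $y$ is a unit because it divides both $a$ and $b$. You instead apply the lemma directly with $x=c$ to get $\gcd(ac,bc)=c\gcd(a,b)=c$ up to a unit, note that $a$ is a common divisor of $ac$ and $bc$ (trivially and by hypothesis, respectively), and invoke condition (2) of the paper's definition of gcd to conclude $a\mid\gcd(ac,bc)$, hence $a\mid c$; your remark that unit ambiguity is harmless for divisibility is exactly the right observation. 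Your version is the classical textbook argument: it avoids the auxiliary elements, the cancellation step, and the unit-cofactor analysis, and it is no less general, so it is arguably preferable to the paper's own proof. One could only note that the degenerate cases $a=0$ or $c=0$ pass through your argument without comment needed, just as in the paper's.
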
 
\begin{proof} 
Suppose that $a|bc$ and that $\gcd(a,b)=1$. Then there exists some $x \in R$ so that $ax=bc$. If we let $g=\gcd(b,x)$, then $g|b$ and there exists a $y \in R$ for which $gy=b$.  
By Lemma \ref{gcd(ax,bx)},  $ag=a\gcd(b,x)=\gcd(ab,ax)$. But $\gcd(ab,ax) =\gcd(ab,bc)=b\gcd(a,c)=dy\gcd(a,c)$. Thus $a=y\gcd(a,c)$. By defintion, $y|a$ and since $dy=b$, $y|b$. Together, these imply that $y|\gcd(a,b)$ and since $\gcd(a,b)=1$, $y$ must be a unit. Since $a$ and $\gcd(a,c)$ differ by a unit $a|c$.  
\end{proof} 

\begin{lemma} Let $R$ be a GCD domain and suppose that $a_1,\ldots,a_n \in R$ are pairwise relatively prime. Then so are $a_1^m,\cdots, a_n^m$ for any integer $m \geq 1$. \end{lemma}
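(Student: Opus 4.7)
The plan is to reduce the problem to the two-element case: it suffices to show that if $\gcd(a,b)=1$ in a GCD domain, then $\gcd(a^m,b^m)=1$ for every $m\geq 1$. Applying this fact to each pair $(a_i,a_j)$ with $i\neq j$ then immediately yields the statement, since the $a_i$ are pairwise relatively prime by hypothesis.

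The key intermediate claim I would establish first is the following: if $\gcd(a,b)=1$, then $\gcd(a,b^m)=1$ for every $m\geq 1$. I would prove this by induction on $m$, with the base case $m=1$ handed to us. For the inductive step, suppose $\gcd(a,b^k)=1$ and set $d=\gcd(a,b^{k+1})$. Observing that $d\mid a$ together with $\gcd(a,b)=1$ forces any common divisor of $d$ and $b$ to be a unit, so $\gcd(d,b)=1$. Then Lemma \ref{a|bc} applied to $d\mid b\cdot b^k$ yields $d\mid b^k$. Combining $d\mid a$ with the induction hypothesis $\gcd(a,b^k)=1$ now forces $d$ to be a unit, completing the induction.

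With the intermediate claim in hand, I would invoke it twice to conclude. First, applied directly to $a,b$, it gives $\gcd(a,b^m)=1$, which by symmetry of the gcd means $\gcd(b^m,a)=1$. Applying the intermediate claim again, this time with $b^m$ playing the role of $a$ and $a$ playing the role of $b$, promotes this to $\gcd(b^m,a^m)=1$. Reading this for each pair $(a_i,a_j)$ with $i\neq j$ finishes the proof.

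I do not anticipate a serious obstacle here, since Lemma \ref{a|bc} is the workhorse and is already available. The only subtlety is the convention, in force throughout this section, that gcds in a GCD domain are only defined up to units, so equalities like ``$\gcd=1$'' should be read as ``the gcd is a unit''; the argument above respects this convention at each step.
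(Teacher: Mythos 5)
Your proof is correct, and it takes a genuinely different route from the paper's. The paper writes out only the case $n=m=2$: it sets $g=\gcd(a^2,b^2)$, introduces witnesses $gx=a^2$, $gy=b^2$, uses Lemma \ref{a|bc} to extract $a\mid x$, and then manipulates these equations to conclude $g\mid a$ and $g\mid b$, hence $g\mid\gcd(a,b)=1$; the general case is dismissed with ``a similar argument.'' You instead isolate the asymmetric claim that $\gcd(a,b)=1$ implies $\gcd(a,b^m)=1$, prove it by a clean induction on $m$ (the step ``$d\mid a$ and $\gcd(a,b)=1$ give $\gcd(d,b)=1$, so Lemma \ref{a|bc} applied to $d\mid b\cdot b^k$ gives $d\mid b^k$'' is exactly right), and then symmetrize by applying the claim a second time with $b^m$ in the role of $a$. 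Both arguments lean on Lemma \ref{a|bc} as the workhorse and both respect the up-to-units convention for gcds, but your decomposition buys two things: it actually covers all exponents $m\geq 1$ rather than just $m=2$, making the paper's hand-wave precise, and it avoids the witness-juggling ($gxy=a^2y=b^2x$, cancellation of $x$, etc.) in favor of a structural induction; the paper's computation, by contrast, is self-contained for $m=2$ and produces the divisibilities $g\mid a$, $g\mid b$ directly without needing the intermediate one-sided statement.
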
 

\begin{proof}

We prove this in the case where $n=m=2$. A similar argument  will give the more general case. Suppose that $\gcd(a,b)=1$. We'll show that this implies $\gcd(a^2,b^2)=1$. Let $g=\gcd(a^2,b^2)$. Then there exist $x,y \in R$ so that $gx=a^2$ and $gy=b^2$. Thus $gxy=a^2y=b^2x$ and so $a|b^2x$.  Since $\gcd(a,b)=1$, Lemma \ref{a|bc} implies $a|x$, and so there exists some $z \in R$ so that $az=x$.  Thus $a^2z^2=x^2$, which implies $gxz^2=x^2$, and so $gz^2=x$.  But this implies that $az=gz^2$ and so $a=gz$.  Thus $g|a$.  A similar argument shows that $g|b$, and so $g|\gcd(a,b)$.  Since $\gcd(a,b)=1$, $g|1$ and so $\gcd(a^2,b^2)=1$, as desired. 
\end{proof}

\begin{theorem} \label{lcm-lemma}Let $R$ be a GCD domain and let $(G,A)$ be an edge-labeled graph where $A=\left\{a_1,\ldots,a_m\right\}$. Let $Q =\lcm(a_1, \cdots, a_m)$. Then Q divides the determinant of any set of $n$ elements of $R_G$. In particular, if $a_1,\ldots,a_m$ are pairwise relatively prime, then $Q = a_1\cdots a_m$.
\end{theorem}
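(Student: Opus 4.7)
The plan is to reduce the problem to a simple row-operation argument applied to the matrix of any $n$ splines, and then invoke the defining properties of the lcm in a GCD domain.

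Fix an arbitrary set $\{C_1,\ldots,C_n\}\subset R_G$ and form the $n\times n$ matrix $M=[C_1,\ldots,C_n]$ whose $(i,j)$ entry is $c_{ij}$, the $i$th component of $C_j$. The central observation is that for each edge $e_k$ with endpoints $v_i,v_j$, the spline condition forces $a_k\mid c_{is}-c_{js}$ for every column index $s$. Thus the elementary row operation $R_i\mapsto R_i-R_j$ preserves $\det(M)$ but produces a matrix whose $i$th row has every entry divisible by $a_k$. Factoring $a_k$ out of that row shows $a_k\mid\det(M)$.

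Since this argument applies to each edge individually, we conclude that $a_k\mid\det(M)$ for every $k=1,\ldots,m$. The definition of $\lcm$ in a GCD domain (condition (2) in the definition of lcm stated earlier) then gives $Q=\lcm(a_1,\ldots,a_m)\mid\det(M)$, which is the first assertion.

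For the second assertion, I would argue by induction on $m$ that if $a_1,\ldots,a_m$ are pairwise relatively prime, then $\lcm(a_1,\ldots,a_m)=a_1\cdots a_m$ up to a unit. The base case $m=2$ follows from the standing convention $\lcm(a,b)=ab/\gcd(a,b)$ together with $\gcd(a_1,a_2)=1$. For the inductive step, set $P=a_1\cdots a_{m-1}$, which by hypothesis equals $\lcm(a_1,\ldots,a_{m-1})$ up to a unit. Using Lemma \ref{a|bc} repeatedly, since $\gcd(a_i,a_m)=1$ for each $i<m$, one shows $\gcd(P,a_m)=1$, and hence $\lcm(P,a_m)=Pa_m=a_1\cdots a_m$. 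Combining these steps gives $Q=a_1\cdots a_m$ in the pairwise coprime case.

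The main step is really just the row-operation observation; there is no serious obstacle. The only point that requires care is iterating the coprimality lemma to conclude $\gcd(a_1\cdots a_{m-1},a_m)=1$, but this is a routine induction given Lemma \ref{a|bc}.
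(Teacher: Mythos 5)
Your proof is correct and follows essentially the same route as the paper: for each edge, subtract the row of one endpoint from the row of the other to see that the edge label divides the determinant, then conclude via the defining property of the lcm. Your induction showing $\gcd(a_1\cdots a_{m-1},a_m)=1$ via Lemma \ref{a|bc} simply fills in a detail the paper asserts without proof in the pairwise coprime case.
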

\begin{proof}  
Let  $C_i=(c_{i1},\ldots,c_{in}) \in R_G$ and define $C=[C_1,\cdots,C_n]$. Then 

\begin{equation*}
|C|= \begin{vmatrix} c_{11} & c_{12} & \cdots &c_{1n} \\
c_{21} & c_{22} & \cdots & c_{2n} \\
\vdots & \vdots &  \ddots & \vdots\\
c_{n1} & c_{n2} & \cdots & c_{nn} \\ \end{vmatrix} = 
\begin{vmatrix} c_{11} - c_{21} & c_{12}-c_{22} & \cdots & c_{1n}-c_{2n} \\
c_{21} & c_{22} & \cdots & c_{2n} \\
 \vdots & \vdots & \ddots & \vdots \\
c_{n1} & c_{2n} & \cdots & c_{nn} \\ \end{vmatrix} \end{equation*}
Let $a_1$ be the edge label between vertices $v$ and $w$. Without loss of generality, assume $v=v_1$ and $w=v_2$. Then $a_1$ divides the difference between the vertex labels associated to $v_1$ and $v_2$. This means  $a_1|c_{1j}-c_{2j}$ for all $1 \leq j \leq n$, so $a_1||C|$. A similar argument shows that for any $k$,  $a_k | |C|$. This implies that $Q=\lcm(a_1,\cdots,a_n)||C|$. If $a_1, \ldots, a_m$ are pairwise relatively prime, $Q=\lcm(a_1, \ldots, a_m)=a_1 \cdots a_m | |C|$. 
\end{proof}

We have an easy partial converse to Theorem \ref{det-theorem} .

\begin{proposition} Let $R$ be an integral domain and let $\mathcal{B}=\left\{ B_1,\ldots,B_n \right\}$ be a basis for $R_G$. Then if $Q = \det(\mathcal{B})$, $Q|\det(C)$ for any n-element subset $C=\left\{ C_1, \dots, C_n\right\}$ of $R_G$.
\end{proposition}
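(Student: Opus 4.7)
The plan is to exploit the fact that $\mathcal{B}$ being a basis forces every $C_i$ to be an $R$-linear combination of the $B_j$'s, and then to read off the desired divisibility from the multiplicativity of the determinant. This is essentially the forward half of the argument in Theorem \ref{basis-theorem}, carried out in the more general situation where we no longer assume that $\mathcal{C}$ is itself a basis.

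First I would choose, for each $i$, coefficients $a_{ij} \in R$ so that $C_i = \sum_{j=1}^n a_{ij} B_j$; such coefficients exist precisely because $\mathcal{B}$ is a basis of $R_G$. Assembling these scalars into a matrix $A = [a_{ij}]$ and writing $B$, $C$ for the matrices whose columns are $B_1, \ldots, B_n$ and $C_1, \ldots, C_n$ respectively, these relations assemble into a single matrix identity of the form $C = BA$ (with the appropriate transpose convention, exactly as in the proof of Theorem \ref{basis-theorem}). Taking determinants and using the standard multiplicativity theorem, which is valid over any commutative ring with unity, I get $\det(\mathcal{C}) = \det(B)\det(A) = Q \cdot \det(A)$. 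Since the entries of $A$ lie in $R$, the scalar $\det(A)$ lies in $R$ as well, so $Q \mid \det(\mathcal{C})$, as required.

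The only subtlety — and it is a minor one — is bookkeeping around the matrix convention (whether the change of coordinates acts on the left or the right) and explicitly invoking that the coefficients $a_{ij}$ live in $R$, not merely in the fraction field of $R$. Both points come essentially for free from the definition of a basis of a free module, so I do not foresee a real obstacle here. The substantive content of the proposition is just that, in the free module $R_G$, every element has coefficients in $R$ with respect to a basis, and this is exactly what lets $\det(A) \in R$ certify the divisibility $Q \mid \det(\mathcal{C})$.
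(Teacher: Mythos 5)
Your proposal is correct and follows essentially the same route as the paper's own proof: expand each $C_i$ in the basis $\mathcal{B}$, package the coefficients into a matrix $A$ with entries in $R$, and conclude $\det(\mathcal{C}) = \det(A)\det(\mathcal{B})$, so $Q \mid \det(\mathcal{C})$. Your remark about the left/right matrix convention is a fair point of care (the paper itself is loose about it), but it does not change the argument.
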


\begin{proof}
Since $B_1, \ldots, B_n$ form a basis for $R_G$, each $C_i$ is a linear combination of the $B_j$'. Thus we may write 
$C_i = \sum_{j=1}^n a_{ij}B_j$ for some $a_{ij} \in R$. Then if $A=\begin{bmatrix} A_ij \end{bmatrix}$ is the  matrix whose $ij$th entry is $a_ij$, we have that 

\begin{equation*}
\begin{bmatrix} C_1, \ldots C_n \end{bmatrix} =A\begin{bmatrix} B_1, \ldots, B_n \end{bmatrix} 
\end{equation*}

Thus $\det\begin{bmatrix} B_1, \ldots , B_n \end{bmatrix}|\det \begin{bmatrix} C_1, \dots, C_n \end{bmatrix}$, as desired. 

\end{proof}

We'd like a full converse to Theorem \ref{det-theorem}, but in order to do so we need to find a $Q$ that is maximal with respect to dividing the determinants of all $n$-element subsets of $R_G$. When the edge labels are relatively prime, the desired $Q$ will be their product. We prove this result below, generalizing Theorem 2.3 from \cite{multivariatesplines} to GCD domains and arbitrary graphs. First we need a few lemmas.


\begin{lemma}\label{hat} Let $R$ be a GCD domain and let $a_1,\dots, a_n \in R$ be pairwise relatively prime. Suppose  $s|\hat{a_i} = a_1...a_{i-1}a_{i+1}...a_n$ for all i. Then $s$ is a unit.
\end{lemma}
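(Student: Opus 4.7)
The plan is to first establish that $s$ is coprime to each $a_j$ individually, and then to strip factors off one of the relations $s \mid \hat{a_i}$ using Lemma~\ref{a|bc} repeatedly until $s$ is forced to be a unit.

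For the first step, fix an index $j$ and let $d$ be any common divisor of $s$ and $a_j$. Since $s \mid \hat{a_j}$ by hypothesis, also $d \mid \hat{a_j}$. I claim $\gcd(a_j, \hat{a_j}) = 1$, which would force $d$ to be a unit. This coprimality follows by a short induction on the number of factors in $\hat{a_j}$, using Lemma~\ref{a|bc}: assuming $\gcd(a_j, a_{k_1} \cdots a_{k_r}) = 1$, and taking $g$ to be any common divisor of $a_j$ and $a_{k_1} \cdots a_{k_r} \cdot a_{k_{r+1}}$, Lemma~\ref{a|bc} applied with the coprimality $\gcd(a_j, a_{k_{r+1}}) = 1$ forces $g \mid a_{k_1} \cdots a_{k_r}$; by the inductive hypothesis $g$ must be a unit. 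So indeed $\gcd(a_j, \hat{a_j}) = 1$, and hence $\gcd(s, a_j) = 1$ for every $j$.

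For the second step, apply these coprimality facts, together with Lemma~\ref{a|bc}, to the relation $s \mid \hat{a_n} = a_1 a_2 \cdots a_{n-1}$. From $\gcd(s, a_1) = 1$ Lemma~\ref{a|bc} yields $s \mid a_2 \cdots a_{n-1}$; iterating with $\gcd(s, a_2) = 1$, then $\gcd(s, a_3) = 1$, and so on, we eventually arrive at $s \mid a_{n-1}$. Combined with $\gcd(s, a_{n-1}) = 1$, this means $s$ is itself a common divisor of $s$ and $a_{n-1}$, so $s$ divides $\gcd(s, a_{n-1})$, which is a unit. Therefore $s$ is a unit.

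I do not expect a substantive obstacle here. The only subtlety is a bookkeeping one: in a GCD domain, gcds are only defined up to associates, so ``$\gcd = 1$'' has to be read up to units throughout, and one should resist the temptation to reason as in a UFD via ``each prime $p \mid s$ must appear in every $\hat{a_i}$.'' Stripping factors off with Lemma~\ref{a|bc} sidesteps this entirely, and the auxiliary claim $\gcd(a_j, \hat{a_j}) = 1$ generalizes cleanly the pattern the authors already used for $a_1^m, \ldots, a_n^m$ in the preceding lemma.
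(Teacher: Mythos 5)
Your proof is correct, but it takes a different route from the paper. The paper's argument is global: since $s$ divides every $\hat{a_i}$, it divides $\gcd(\hat{a_1},\ldots,\hat{a_n})$, and this gcd is computed to be $1$ via the identity $\gcd(\hat{a_1},\ldots,\hat{a_n}) = \frac{a_1\cdots a_n}{\lcm(a_1,\ldots,a_n)}$ together with the fact that the lcm of pairwise coprime elements is their product; the identity itself is only cited as ``an argument from elementary number theory.'' You instead work element by element: you first prove the auxiliary fact $\gcd(a_j,\hat{a_j})=1$ by induction using Lemma~\ref{a|bc} (the needed coprimality $\gcd(g,a_{k_{r+1}})=1$ follows from $g\mid a_j$, a small step you leave implicit but which is immediate), conclude $\gcd(s,a_j)=1$ for every $j$, and then repeatedly apply Lemma~\ref{a|bc} to $s\mid \hat{a_n}=a_1\cdots a_{n-1}$ to strip off factors until $s\mid a_{n-1}$, whence $s\mid\gcd(s,a_{n-1})=1$. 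What your version buys is self-containedness: it rests only on Lemma~\ref{a|bc}, which the paper has already proved for GCD domains, and avoids invoking the multi-element gcd--lcm identity, which in a general GCD domain deserves its own justification. What the paper's version buys is brevity and a sharper intermediate statement, namely that $\gcd(\hat{a_1},\ldots,\hat{a_n})$ is itself a unit, of which the conclusion about $s$ is an immediate corollary. Both arguments are sound; yours trades one unproved classical identity for a short induction.
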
 

\begin{proof}
Since $s$ is a common divisor of the $\hat{a_i}$ , $s|\gcd(\hat{a_1},\ldots,\hat{a_n})$.  
An argument from elementary number theory shows that 

\begin{equation*}
\gcd(\hat{a_1},\cdots,\hat{a_n}) = \frac{a_1\cdots a_n}{lcm(a_1,\ldots,a_n)}
\end{equation*} 

Since the $a_i$'s are pairwise relatively prime,  $\lcm(\hat{a}_1,\cdots,\hat{a}_n) = a_1\cdots a_n$. Then the above equation implies that $gcd(\hat{a}_1, \ldots, \hat{a}_n)=1$, so $s|1$ and $s$ is a  unit or $1$. 
\end{proof} 

We can now prove the main theorem of this section, which states that even when no flow-up class basis exists, we not only have a formula for the determinant of any basis of $R_G$, but we also have a criterion to determine if $R_G$ is free, as long as the edge labels are pairwise relatively prime.

\begin{theorem}  \label{relprime} Let $R$ be a GCD domain, let $G$ be a graph with relatively prime edge labels, and let $Q = a_1 \cdots a_m$. Then $R_{G}$ is free with basis $\mathcal{B}=\{B_1,.,.B_n\}$ if and only if $\det(\mathcal{B}) = uQ$ for some unit $u \in R$.
\end{theorem}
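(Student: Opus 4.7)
The plan is to prove both directions using the determinantal framework established in this section. The $(\Leftarrow)$ direction is routine: by Theorem \ref{lcm-lemma}, since the edge labels are pairwise relatively prime, $Q = a_1 \cdots a_m$ divides the determinant of every $n$-element subset of $R_G$. Given $\mathcal{B}$ with $\det(\mathcal{B}) = uQ$, replacing $B_1$ by $u^{-1}B_1$ produces $\mathcal{B}'$ with $\det(\mathcal{B}') = Q$, and Theorem \ref{det-theorem} applies to show $\mathcal{B}'$ is a basis; since $\mathcal{B}$ differs from $\mathcal{B}'$ by a unit scalar, it is also a basis, and $R_G$ is free.

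For the $(\Rightarrow)$ direction, suppose $\mathcal{B} = \{B_1, \ldots, B_n\}$ is a basis of $R_G$. Theorem \ref{lcm-lemma} gives $\det(\mathcal{B}) = sQ$ for some $s \in R$; the task is to show $s$ is a unit. The idea is to construct, for each vertex index $i$, an explicit $n$-element subset of $R_G$ whose determinant constrains $s$. Let $F_j$ be the spline with entry $p_j := \prod_{k : v_j \in e_k} a_k$ at position $j$ and zeros elsewhere; $F_j \in R_G$ because every edge label incident to $v_j$ divides $p_j$. The matrix $[F_1, \ldots, F_n]$ is diagonal with determinant $\prod_j p_j = \prod_k a_k^2 = Q^2$. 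Replacing $F_i$ by $\mathbf{1} = (1, \ldots, 1)$ and expanding the resulting matrix along column $i$ (only the diagonal cofactor survives) gives
\[
\det[F_1, \ldots, F_{i-1}, \mathbf{1}, F_{i+1}, \ldots, F_n] \;=\; \prod_{j \neq i} p_j \;=\; Q^2/p_i.
\]
Since each $F_j$ and $\mathbf{1}$ lies in the span of $\mathcal{B}$, this matrix factors through $[B_1, \ldots, B_n]$, so $\det(\mathcal{B}) = sQ$ divides $Q^2/p_i$; that is, $s \mid Q/p_i$ for every $1 \leq i \leq n$.

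The main obstacle is to deduce from the family $\{s \mid Q/p_i\}$ that $s$ is a unit, and here the pairwise coprimality hypothesis is essential. Fix an edge label $a_k$ and let $v_p$ be an endpoint of its edge. Then $a_k \mid p_p$, so $Q/p_p$ is a product of edge labels $a_l$ with $l \neq k$, each coprime to $a_k$; iterating Lemma \ref{a|bc} shows $\gcd(Q/p_p, a_k) = 1$. Since $s \mid Q/p_p$, it follows that $\gcd(s, a_k) = 1$ for every $k$. Combined with $s \mid Q = a_1 \cdots a_m$, one more iteration of Lemma \ref{a|bc} forces $s \mid 1$, so $s$ is a unit and $\det(\mathcal{B}) = uQ$ with $u = s$. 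Without pairwise coprimality, $\gcd_i(Q/p_i)$ may be a nontrivial divisor of $Q$ and this divisibility chase breaks down, consistent with the authors' conjecturing (rather than proving) the more general statement in the introduction.
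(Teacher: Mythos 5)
Your proof is correct, and its reverse direction coincides with the paper's (with the small improvement that you handle the unit $u$ explicitly by rescaling one column before invoking Theorem \ref{det-theorem}, whereas the paper tacitly takes $u=1$). The forward direction, however, takes a genuinely different route to the same end. The paper works edge by edge: for each label $a_k$ it builds an $n$-column spline matrix with entries $Q$ and $\hat{a}_k$ whose determinant is $Q\hat{a}_k^{\,n-1}$, concludes $s \mid \hat{a}_k^{\,n-1}$, and then needs two auxiliary facts --- that powers of pairwise coprime elements remain pairwise coprime, and Lemma \ref{hat} on the gcd of the $\hat{a}_k$ --- to force $s$ to be a unit. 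You instead work vertex by vertex, recycling the vertex splines $F_j$ from the non-emptiness lemma together with $\mathbf{1}$; the determinant $Q^2/p_i$ yields the exponent-free divisibility $s \mid Q/p_i$, and the endgame uses only Lemma \ref{a|bc}: each $a_k$ is coprime to $Q/p_p$ for $v_p$ an endpoint of $e_k$, hence coprime to $s$, and then $s \mid Q$ lets you peel off the $a_k$ one at a time to get $s \mid 1$. Your version thus avoids both the power lemma and Lemma \ref{hat} and is arguably cleaner, while sharing the same overall skeleton as the paper (explicit spline matrices constrain $s$, pairwise coprimality kills it) and the same implicit assumptions that the edge labels are nonzero (so $Q$ can be cancelled in the integral domain) and that $G$ is connected (so every vertex meets an edge and every edge label gets tested).
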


\begin{proof}
Let $\mathcal{B}=\left\{ B_1, \ldots, B_n \right\}$ be a basis for $R_G$. By Theorem \ref{lcm-lemma}, $\det(\mathcal{B})=sQ$ where $Q=a_1\cdots a_m$ and $s \in R$.  Let $Q_{{a}_i} = \hat{a_i}=a_1\cdots a_ia_{i+1}\cdots a_n$. Without loss of generality, let $a_1$ be an edge connecting $v_1$ and $v_2$ in $G$ and observe that 
\begin{equation}
    QQ_{{a}_1}^{n-1} = 
    \begin{vmatrix} 
    Q_{{a}_1} & 0 & 0 & \cdots & 0 \\
    Q_{{a}_1} & Q & 0 & \cdots & 0 \\
    0 & 0 & Q_{{a}_1} & \cdots & 0 \\ 
    \vdots & \vdots & \vdots & \ddots & 0 \\
    0 & 0 & 0 & \cdots &  Q_{{a}_1}  \end{vmatrix}
\end{equation}
Since $a_1$ connects $v_1$ and $v_2$ in $G$, each column in the above matrix is a spline in $R_G$, hence is in the span of the $B_i$'s. Since $\det(\mathcal{B})=sQ$ , the determinant of the above matrix is equal to $Qsr_1$ for some $r_1 \in R$. But then $Qsr_1=Q(Q_{{a}_1})^{n-1}$, so $(Q_{{a}_1})^{n-1}=sr_1$. Thus $s|(Q_{{a}_1})^{n-1}$. A similar argument can be made for any edge $a_i$, so in fact we have that $s|(Q_{{a}_i})^{n-1}$ for all edge labels $a_i$ and so $s|\hat{a_i}$ for all $i$. Since the $a_i$ are pairwise relatively prime, by Lemma \ref{hat}, $s$ is a unit.  Thus $\det(\mathcal{B})=sQ$, where $s$ is a unit, as desired. 
Conversely, suppose $\det(\mathcal{B}) = Q$. 
By Lemma \ref{lcm-lemma},  Q divides the determinant of any set of $n$ elements of $R_G$, hence by Theorem \ref{det-theorem}, $\mathcal{B}$ is a basis for $R_G$.
\end{proof}

We conjecture that when the edge labels are not relatively prime, the previous theorem will be true when $Q$ is maximal with respect to dividing all determinants of n-element subsets of $R_G$.  In \cite{jefflauren}, Rose and Suzuki constructed explicit flow-up class bases when $R = \mathbb{Z}$ - but the proofs extend easily to any PID - where the leading terms are described by least common multiples and greatest common divisors of certain subsets of the edge labels. 

Let $(G,A)$ be an edge-labeled graph where $A = \{a_1,\ldots,a_m\}$ is an arbitrary set of elements of a GCD domain R.  Let $\{l_1,\ldots, l_n\}$ have the same theoretical form (in terms of gcds and lcms of edge labels) as the leading terms of the flow-up class bases constructed in \cite{jefflauren}. 

\textbf{Conjecture}. Let $R$ be a GCD domain and $Q$ be the product of the leading terms $l_i$ defined above. Then $B$ is a basis for $R_G$ if and only if $\det(B) = uQ$ for some unit $u \in R$.  

\begin{remark}
Gjoni \cite{gjoni}, Mahdavi \cite{mahdavi}, and Blaine \cite{blaine} proved this conjecture for integer splines on cycles and diamond graphs, and Altinok and Sarioglan (\cite{alt1}, \cite{alt2}) extended their results to arbitrary GCD domains in the special case of cycles and diamond graphs.     
\end{remark}

\section{Spline Modules with no Flow-up Class Bases}

The theorem below illustrates that Theorem \ref{fcbasis} is likely the best possible, i.e. if $R$ is not a PID, then a flow-up class basis is not guaranteed.

\begin{figure}
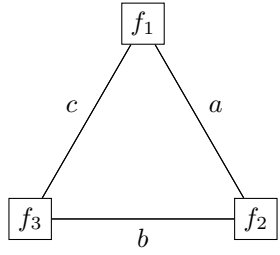

$\kthree{0.3}{f_{1}}{f_{2}}{f_{3}}{a}{b}{c}$
\caption{Generalized spline on a 3-cycle with edge weights $a,b,c$ }
\label{no pid no fc basis}
\end{figure}

\begin{theorem} Let $R$ be a GCD domain that is not a PID. Then there always exists a spline module that does not have a flow-up class basis. \end{theorem}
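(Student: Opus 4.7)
The plan is to exhibit a concrete edge-labeled graph whose spline module has no flow-up class basis. Since $R$ is a GCD domain but not a PID, I would first handle the case where $R$ is not Bezout, so that there exist $b, c \in R$ for which the ideal $(b, c)$ is not principal. The prototype to keep in mind is $R = \mathbb{Z}[x]$ with $b = 2$ and $c = x$: here $\gcd(b,c) = 1$ but $(2, x) \neq R$, since a representation $1 = 2f(x) + xg(x)$ would force $2f(0) = 1$.

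First I would take the 3-cycle of Figure \ref{no pid no fc basis} with edge labels $a = 1$, $b$, and $c$. Since $a = 1$ is a unit, the edge it labels imposes no condition, and a direct computation gives
\begin{equation*}
\mathcal{F}_1 \;=\; \{(0, f_2, f_3) : b \mid f_2 - f_3, \ c \mid f_3\}.
\end{equation*}
Each such $f_2$ lies in $(b) + (c)$, and conversely any $f_2 = br + cs$ extends to the spline $(0,\, br + cs,\, cs) \in \mathcal{F}_1$. Hence $L(\mathcal{F}_1) = (b, c)$, which is not principal by construction.

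Next I would argue by contradiction. Assume $\{B_0, B_1, B_2\}$ is a flow-up class basis with $B_i \in \mathcal{F}_i$. For any $F \in \mathcal{F}_1$, write $F = r_0 B_0 + r_1 B_1 + r_2 B_2$. Comparing first coordinates gives $r_0 \cdot LT(B_0) = 0$, and since $R$ is an integral domain with $LT(B_0) \neq 0$, we get $r_0 = 0$. The second coordinate of $F$ is then $r_1 \cdot LT(B_1)$, so $LT(F) \in (LT(B_1))$. As $F$ was arbitrary, $L(\mathcal{F}_1) \subseteq (LT(B_1))$; the reverse inclusion is immediate from $B_1 \in \mathcal{F}_1$ together with the fact (established earlier in the paper) that $L(\mathcal{F}_1)$ is an ideal. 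Therefore $L(\mathcal{F}_1)$ is principal, contradicting the non-principality of $(b,c)$.

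The main obstacle is not the construction itself, which is quite clean, but rather guaranteeing that a suitable pair $b, c$ exists in every non-PID GCD domain. For non-Bezout GCD domains --- in particular any non-PID UFD such as $\mathbb{Z}[x]$ or $k[x_1, x_2]$ --- the existence of a non-principal $2$-generated ideal is automatic and the argument above applies verbatim. The Bezout-but-not-PID case is genuinely different (such rings are necessarily non-Noetherian), and the flow-up class obstruction via a non-principal $L(\mathcal{F}_i)$ cannot be built from a finite graph in that setting; I would expect this subcase to be handled instead by exhibiting a graph for which $R_G$ fails to be finitely generated, so that no module basis of any kind --- in particular no flow-up class basis --- can exist.
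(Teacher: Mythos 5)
Your construction is in substance the paper's own example --- a $3$-cycle whose spline module is obstructed by a non-principal two-generated ideal $\langle b,c\rangle$ --- but the way you reach the contradiction is genuinely different. The paper takes pairwise relatively prime labels $a,b,c$ with $\langle b,c\rangle$ non-principal and $a\notin\langle b,c\rangle$, argues that any flow-up class basis must have the shape $(1,1,1)$, $(0,ax,cy)$, $(0,0,bcz)$, and then invokes the determinant criterion of Theorem \ref{relprime} to force $x$ and $z$ to be units, whence $a\in\langle b,c\rangle$. You instead take the third label to be $1$, compute $L(\mathcal{F}_1)=\langle b,c\rangle$ directly (correctly: $(0,br+cs,cs)$ is a spline, and conversely every admissible second coordinate has this form), and observe by a short coordinate comparison that the existence of a flow-up class basis forces $L(\mathcal{F}_1)=\langle LT(B_1)\rangle$, using only that $R$ is a domain and that $L(\mathcal{F}_1)$ is an ideal. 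This is more elementary --- it bypasses Theorem \ref{relprime} and all relative-primality hypotheses --- and it makes the paper's side condition $a\notin\langle b,c\rangle$ automatic, since a non-principal ideal is proper and so cannot contain $1$.

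The genuine gap is the one you flag yourself: your argument (like the paper's) needs a non-principal finitely generated ideal, i.e.\ it needs $R$ to be non-Bezout, and you leave the Bezout-but-not-PID case to the unproven expectation that some $R_G$ fails to be finitely generated. As a proof of the theorem as stated, that subcase is missing, and your proposed mechanism is speculative: in a Bezout GCD domain every finitely generated ideal is principal, so leading-term ideals built from finitely many gcd/lcm conditions tend to be principal, and it is not at all clear that non-finite generation occurs for a finite graph --- indeed it is not clear the theorem is true there by any construction. Note, however, that the paper's proof carries exactly the same hidden hypothesis: its opening ``Suppose that $a,b,c$ are pairwise relatively prime, where $\langle b,c\rangle$ is not principal and $a\notin\langle b,c\rangle$'' silently assumes such elements exist, which fails in every Bezout non-PID GCD domain (e.g.\ valuation domains of higher rank or the ring of entire functions). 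So your write-up matches the paper's actual scope while being more candid about the boundary; a complete proof of the stated theorem would require either a separate treatment of the Bezout case or weakening the statement to ``GCD domain that is not Bezout.''
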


\begin{proof} 
We prove this by finding an explicit graph and edge labels for which $R_G$ has no flow-up class basis. Let $G$ be a three-cycle with edge labels $a,b,c \in R$ as in Figure \ref{no pid no fc basis}. 

Suppose that $a, b, c $ are pairwise relatively prime, where $<b,c>$ is not principal and  $a \notin <b,c>$.

The defining equations for the spline in Figure \ref{no pid no fc basis} imply that  $a|f_1-f_2$, $b|f_2-f_3$ and $c|f_3-f_1$. Thus, any flow up class basis has the form  $B_1=(1,1,1)$, $B_2=(0,ax,cy)$ and $B_3=(0,0,bcz)$ for some $x,y,z$ in $R$. Let $B=[B_1,B_2,B_3]$. Then $\det(B)=abcxz$. On the other hand, since $a,b,c$ are pairwise relatively prime, $\det(B)=uabc$ for some unit $u \in R$. Thus, $abcxz=uabc$ so $xzu^{-1}=1$ and so $x$ and $z$ are also units in $R$. Since $b|f_2-f_3$ where $f_2=ax$ and $f_3=cy$, we have that there exists some $w \in R$ so that $ax-cy=bw$. Since $x$ is a unit, we may divide both sides by $x$ to obtain $a=x^{-1}cy + x^{-1}bw$ so that $a \in <b,c>$. This is a contradiction. Thus no flow-up class basis for $R_G$ exists. 
\end{proof} 

\begin{figure}
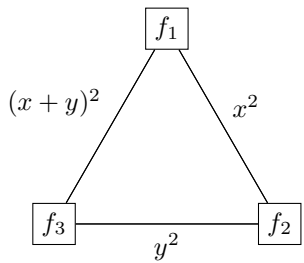

$\kthree{0.3}{f_{1}}{f_{2}}{f_{3}}{x^2}{y^2}{(x+y)^2}$
\caption{Generalized spline on a 3-cycle with edge weights $x^2,y^2,(x+y)^2$ }
\label{no fc basis polynomial}
\end{figure}

\begin{proposition}: Let $G$ be a three-cycle with edge labels $x^2,y^2$ and $(x+y)^2$ over the ring $R=k[x,y]$ where $\text{char}(k) \ne 2.$ Then $R_G$ is free, but does not have a flow-up class basis.
\end{proposition}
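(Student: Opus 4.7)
The plan is to prove the two assertions separately. Since the edge labels $x^2$, $y^2$, $(x+y)^2$ are pairwise relatively prime in $R = k[x,y]$, Theorem~\ref{relprime} applies with $Q = x^2 y^2 (x+y)^2$: a triple of splines forms a basis of $R_G$ if and only if its determinant is a unit multiple of $Q$. This reduces freeness to exhibiting such a triple, and it also pins down what a flow-up class basis would have to look like.

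For freeness, I would produce the explicit triple
\[
B_1 = (1,1,1), \qquad B_2 = (0,\, x^3,\, (x+y)^2(x-2y)), \qquad B_3 = (0,\, x^2 y,\, (x+y)^2 y),
\]
verify each $B_i \in R_G$ by direct substitution (the only nontrivial congruences are $x^3 - (x+y)^2(x-2y) = 3xy^2 + 2y^3$ and $x^2 y - (x+y)^2 y = -y^2(2x+y)$, both divisible by $y^2$), and then compute
\[
\det[B_1,B_2,B_3] = x^3(x+y)^2 y - x^2 y (x+y)^2 (x-2y) = 2 x^2 y^2 (x+y)^2 = 2Q.
\]
Since $\mathrm{char}(k)\neq 2$, the constant $2$ is a unit in $R$, so Theorem~\ref{relprime} yields that this triple is a basis and $R_G$ is free.

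For the absence of a flow-up class basis, I would argue by contradiction. Suppose $\{C_1, C_2, C_3\}$ is a flow-up class basis with $C_i \in \mathcal{F}_{i-1}$. Then $[C_1, C_2, C_3]$ is lower triangular with diagonal entries the leading terms $\ell_i = LT(C_i)$. Because $(1,1,1)\in\mathcal{F}_0$, we have $L(\mathcal{F}_0)=R$ and $\ell_1$ is a unit; and the spline conditions on $(0,0,f_3)$ force $y^2(x+y)^2 \mid f_3$, so $L(\mathcal{F}_2)=\langle y^2(x+y)^2\rangle$ and $\ell_3$ is a unit times $y^2(x+y)^2$. Combined with $\det[C_1, C_2, C_3] = uQ$ from Theorem~\ref{relprime}, this forces $\ell_2 = vx^2$ for some unit $v \in R$.

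The contradiction then comes from showing no spline of the form $C_2 = (0, vx^2, g)$ exists in $R_G$. Such a spline requires $g = (x+y)^2 h$ for some $h \in R$ and $y^2 \mid vx^2 - (x+y)^2 h$. Working modulo $y^2$, write $h \equiv a+by \pmod{y^2}$ with $a,b \in k[x]$; using $(x+y)^2 \equiv x^2 + 2xy \pmod{y^2}$ yields
\[
(x^2+2xy)(a+by) \equiv x^2 a + (2xa + x^2 b)\, y \pmod{y^2},
\]
so matching coefficients against $vx^2$ forces $a = v$ and $2xv + x^2 b = 0$, i.e.\ $xb = -2v$, which is impossible in $k[x]$ since $\mathrm{char}(k) \neq 2$. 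This modular calculation is the main obstacle; the rest is straightforward bookkeeping with Theorem~\ref{relprime} and the definitions.
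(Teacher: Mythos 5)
Your proposal is correct, and its first half takes a genuinely different route from the paper. For freeness the paper simply cites the correspondence with bivariate polynomial spline modules on a subdivided disk (\cite{rose2}), whereas you exhibit the explicit triple $B_1=(1,1,1)$, $B_2=(0,\,x^3,\,(x+y)^2(x-2y))$, $B_3=(0,\,x^2y,\,(x+y)^2y)$, verify the spline conditions, compute $\det[B_1,B_2,B_3]=2x^2y^2(x+y)^2=2Q$, and invoke Theorem \ref{relprime}; since $2$ is a unit when $\mathrm{char}(k)\neq 2$, this triple is a basis and $R_G$ is free. Your route is more self-contained and elementary (no appeal to freeness results for planar subdivisions), at the cost of an ad hoc construction; it is also a nice illustration of the theorem, and your basis has two elements in $\mathcal{F}_1$, consistent with the conclusion that no flow-up class basis exists. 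The second half is essentially the paper's argument: both use the triangular shape of a putative flow-up class basis, the divisibility constraints $x^2\mid \ell_2$ and $y^2(x+y)^2\mid \ell_3$ coming from the spline conditions, and $\det=uQ$ from Theorem \ref{relprime} to force $\ell_2=vx^2$ with $v$ a unit, then derive a contradiction from $\mathrm{char}(k)\neq 2$; your computation modulo $y^2$ is just a cleaner way of writing the paper's ``equating coefficients'' step for $s(x+y)^2=rx^2+ty^2$.

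One point to tighten: as written, you justify ``$\ell_1$ is a unit'' by $L(\mathcal{F}_0)=R$ and ``$\ell_3$ is a unit times $y^2(x+y)^2$'' by $L(\mathcal{F}_2)=\langle y^2(x+y)^2\rangle$. A flow-up class basis element is not required to be a minimal element of its class, so neither claim follows from the leading-term ideals alone; a priori $C_1$ or $C_3$ could have a non-minimal leading term. Both claims (and the one you actually need, $\ell_2=vx^2$) do follow from ingredients you already have: write $\ell_2=x^2a$ and $\ell_3=y^2(x+y)^2b$ using the spline conditions, and then $\ell_1\ell_2\ell_3=\det[C_1,C_2,C_3]=u\,x^2y^2(x+y)^2$ gives $\ell_1ab=u$, so $\ell_1$, $a$, and $b$ are all units. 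With the justification reordered in this way, your argument is complete.
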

.
\begin{proof} First note that $R_G$ is free by \cite{rose2}, since it corresponds to a polynomial spline module in two variables over a subdivided disk. Assume it has a flow-up class basis $\mathcal{B}=\{ B_1, B_2, B_3 \}$. Then by Theorem \ref{relprime}, $\det(\mathcal{B})= ux^2y^2(x+y)^2$ for some unit $u \in k[x,y]$. Then we can assume that $B_1 = (1,1,1)$, $B_2=(0,f,g)$ and $B_3 = (0,0,h)$. Referring to Figure \ref{no fc basis polynomial}, the defining equations imply $f=rx^2$, $g=s(x+y)^2$, $g-f=ty^2$, and $h=py^2(x+y)^2$, for some $r,s,t,p \in k[x,y]$.

Then $\det[B_1, B_2, B_3]=fh=rpx^2y^2(x+y)^2$. Since this must also equal $ux^2y^2(x+y)^2$ , we have that $rpu^{-1}=1$, so $r$ and $p$ are units, hence nonzero constants. Since $g-f=ty^2$, we have that $s(x+y)^2 = rx^2+ty^2$. Equating coefficients, we see that $s=t=r$. Hence $(x+y)^2 = x^2+y^2$, which is false as long as $\text{char}(k) \ne 2.$
\end{proof} 

Although they are not guaranteed, flow-up class bases can exist over $R=k[x,y]$. A trivial example would be if $G$ is a tree, since in this case $R_G$ has a flow-up class basis over any commutative ring with unity. (\cite{julia}.) However, there are non-trivial examples, as we see below.

\begin{figure}
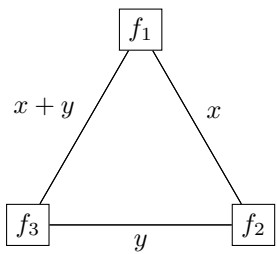

$\kthree{0.3}{f_{1}}{f_{2}}{f_{3}}{x}{y}{x+y}$
\caption{Generalized spline on a 3-cycle with edge weights (x,y,x+y)}
\label{xyspline} 
\end{figure}

\begin{example}
Let $G$ be the 3-cycle graph in Figure \ref{xyspline} with edge labels $x$, $y$, and $x+y$. Let $B_1 =(1,1,1), B_2=(0,x,x+y)$ and $B_3=(0,0,y(x+y))$.  It's easy to see that these satisfy the spline conditions, and that $\det[B_1,B_2,B_3]=xy(x+y)$, the product of the edge labels. It follows from Theorem \ref{det-theorem}, that $B_1, B_2,B_3$ form a flow-up class basis for $R_G$.
\end{example}

\bibliographystyle{amsplain}

\begin{thebibliography}{10}



\bibitem{Alfeld} 
P. Alfeld, L.L. Schumaker, M. Sirvent, \textit {On dimension and existence of local bases for multivariate spline spaces}, J. Approx. Theory {\bf 70}, (1992).

\bibitem{alt1} S. Altinok and S. Sarioglan, \textit{Flow up bases for generalized spline modules over arbitrary graphs}, Journal of Algebra and Its Applications, Vol. 20, No. 10, 2150180 (2021) 


\bibitem{alt2} S. Altinok and S. Sarioglan, \textit{Basis criteria for generalized spline modules via determinant}, Discrete Mathematics,
Volume 344, Issue 2,2021.



\bibitem{constantsplines} K. Anders, Alissa Crans, Briana Foster-Greenwood, Blake Mellor, Julianna Tymoczko, \textit{Graphs admitting only constant splines}, Pacific J. Math. {\bf 304}, (2020) 385--400. 


\bibitem{billera1} L.\ Billera, \textit{The algebra of continuous piecewise polynomials}, Advances in Math. 76 (1988), 170–183. 

\bibitem{billera0} L.\ Billera, \textit{Homology of smooth splines: generic triangulations and a conjecture of Strang}, Transactions of the American Mathematical Society, \textbf{310 (1)} (1998), 325--340.


\bibitem{billera2} L.\ Billera,  L.\ Rose, \textit{A dimension series for multivariate splines}, Discrete Computational Geometry, \textbf{6 (2)} (1991), 107--128.


\bibitem{rose-billera2} L. Billera, L. Rose, \textit{Modules of Piecewise Polynomials and their Freeness},  Mathematische Zeitschrift, {\bf 209}  (1992).


\bibitem{blaine} K. Blaine, \textit{Determinantal Conditions on Integer Splines}, Senior Thesis, Bard College, 2018. 

\bibitem{smith2} N. Bowden, M. King, S. Hagen, S. Reinders, \textit{Bases and Structure Constants of Generalized Splines with Integer Coefficients on Cycles,
}  arXiv:1502.00176 (2015).

\bibitem{bowden} N. Bowden, J. Tymoczko, \textit{Splines $\mod m$}.  arXiv:1501.02027 (2015).

\bibitem{noneuclidean} Oscar A. C´ampoli, \textit{A principal ideal domain that is not a Euclidean domain},
Amer. Math. Monthly, \textit{95 (9)} (1988), 868–871.


\bibitem{julia} S.\ Gilbert, J.\ Tymoczko, S.\ Viel, \textit{Generalized Splines on Arbitrary Graphs}, Pacific Journal of Mathematics, \textbf{281 (2)} (2016), 333--364.

\bibitem{gjoni} E. Gjoni, \textit{ Basis Criteria for n-cycle integer splines}, Senior Thesis, Bard College, 2015. 

\bibitem{haas} R. Haas, \textit{Module and vector space bases for spline spaces}, Journal of Approximation Theory \textbf{65 (1)}  (1991), 73--89.

\bibitem{smith students} M. Handschy, J. Melnick, S. Reinders, \textit{Integer Generalized Splines on Cycles}.  arXiv:1409.1481 (2014).

\bibitem{studentpaper}  J. Liu, L. Rose, K. Sangam, J. Suzuki and E. Zhang, {\it Modules Bases for Integer Splines}, in revision.

\bibitem{mahdavi} E. Mahdavi, \textit{Integer Generalized Splines on the Diamond Graph}, Student Thesis, Bard College, 2016.

\bibitem{rose1} L. Rose,\textit{Combinatorial and topological invariants of modules of piecewise polynomials}, Advances in Mathematics, \textbf{116 (1)}, (1995) 34--45.

\bibitem{rose2} L. Rose, \textit{Graphs, syzygies and multivariate splines},  Discrete and Computational Geomeomtry, \textbf{32 (4)} (2004), 623--637. 

\bibitem{multivariatesplines} L. Rose, \textit{Module Bases for Multivariate Splines},
{\it Journal of Approx. Theory\/} {\bf 86} No. 1, (1996).

\bibitem{jefflauren} L. Rose and J. Suzuki, \textit{Generalized integer splines on arbitrary graphs}, to appear in Discrete Math.

\bibitem{julia2} J. Tymoczko, \textit{Splines in geometry and topology}, Computer Aided Geometric Design, \textbf{45} (2016), 32--47.




\end{thebibliography}

\end{document}